%% file: numalgo_2026_LargeBlockSale.tex
\documentclass[a4paper,fleqn]{cas-sc}
\usepackage{float}
\usepackage{algorithmic}
\usepackage{algorithm}
\usepackage{tikz}
\usepackage[authoryear,longnamesfirst]{natbib}

\def\tsc#1{\csdef{#1}{\textsc{\lowercase{#1}}\xspace}}
\tsc{WGM}
\tsc{QE}

\definecolor{davBlue}{rgb}{0,0.1843,0.6549}
\definecolor{domPink}{rgb}{.9373,.2784,.4353}

\usepackage{ulem}

\tikzset{
	cmtDebutDN/.pic={
		\draw[very thick,dashed] (0,0) -- (6,0); 
		\draw[very thick] (8,0) node{DEBUT David};
		\draw[very thick,dashed] (10,0) -- (15,0);
	}
}

\tikzset{
	cmtDebutDQ/.pic={
		\draw[very thick,dashed] (0,0) -- (5,0); 
		\draw[very thick] (8,0) node{DEBUT Dominique};
		\draw[very thick,dashed] (11,0) -- (15,0);
	}
}

\tikzset{
	cmtFin/.pic={
		\draw[very thick,dashed] (0,0) -- (6,0); 
		\draw[very thick] (8,0) node{FIN Modif};
		\draw[very thick,dashed] (10,0) -- (15,0);
	}
}

\newtheorem{definition}{Definition}
\newtheorem{theorem}{Theorem}
\newtheorem{lemma}[theorem]{Lemma}
\newtheorem{corollary}[theorem]{Corollary}
\newproof{pf}{Proof}
\newdefinition{rmk}{Remark}
\def\RR{{\mathbb{R}}}
\def\NN{{\mathbb{N}}}

\def\CC{{\mathcal{C}}}
\def\DD{{\mathcal{D}}}

\def\EN{numerical experiments}
\def\BnB{\textit{Branch \& Bound}}

\def\PNS{$\big(P_{N,M}\big)$}
\def\BPNS{$\mathbf{\big(P_{N,M}\big)}$}
\def\PS{$\big(PS_{N,M}\big)$}
\def\PRC{$\big(PC_{N,M}\big)$}

\def\PNSnm{$\big(P_{n,m}\big)$}

\def\ub2{$\mbox{UB}_1$}
\def\LS{\textit{LocalSolver}}
\def\Bon{\textit{Bonmin}}
\def\Cou{\textit{Couenne}}
\def\Sci{\textit{Scip}}

\def\Bar{\textit{Baron}}
\def\pht{\phantom{ }}

\begin{document}
\let\WriteBookmarks\relax
\def\floatpagepagefraction{1}
\def\textpagefraction{.001}

\shorttitle{}    

\shortauthors{}  

\title [mode = title]{Non-convex integer programming: a deterministic solution method of the large block sale problem}

\tnotemark[1] 

\tnotetext[1]{An extended abstract of this paper appeared in CODIT IEEE 2022, ISBN 978-1-6654-9607-0, pages 146-151} 

\author[1]{David Nizard}[orcid=0000-0001-9731-7191]


\ead{david.nizard@universite-paris-saclay.fr}

\affiliation[1]{organization={Laboratoire Interdisciplinaire des Sciences du Num\'erique (LISN), Universit\'e Paris Saclay},
            addressline={1 rue Ren\'e Thom}, 
            city={Gif-Sur-Yvette},
            postcode={91190}, 
            country={France}}

\author[1]{Dominique Quadri}
\cormark[1]


\ead{dominique.quadri@universite-paris-saclay.fr}

\cortext[1]{Corresponding author: Dominique Quadri}


\begin{abstract}
We study a non-convex, nonlinear integer mathematical program with a linear constraint. This problem originates in finance and consists of selling a large predetermined quantity of a financial asset over a fixed time horizon, while maximizing the total proceeds. The selling price is modeled using a deterministic penalty function capturing the market's adverse reaction to the selling flow. From a complexity perspective, the problem is in NP. We first solve the problem exactly using dynamic programming and propose several heuristics that yield tight numerical lower bounds.\\We then reformulate the problem into an equivalent factorable form and derive a tight convex relaxation using McCormick inequalities. This relaxation is embedded within an adapted \BnB\pht algorithm to find the global optimum in finite time. Numerical experiments show that our methods outperform several widely used generic solvers (\LS, \Sci, \Bar, \Cou, and \Bon) for selected instances, but do not yield tight upper bounds, beyond small instances.
\end{abstract}


\begin{keywords}
Mixed integer non linear programming\sep Dynamic programming\sep Non-convex optimization\sep Optimal liquidation portfolio.
\end{keywords}

\maketitle

\section{\textsc{Introduction and problem formulation} }\label{sectionIntro}
	\input{numalgo2026_intro_v1.tex}

\section{\large \textsc{Related research and our contributions}}
\input{numalgo2026_littrev_contrib_v1.tex}

\section{\large \textsc{An exact solution method and heuristics for \BPNS}}\label{sectionLB}
	\input{numalgo2026_DP_intro_v1.tex}
	
	\subsection{An exact solution method based on Dynamic Programming}\label{sectionDPExact}
		\input{numalgo2026_exactDP_v1.tex} 

	\subsection{Two-step method based on dynamic programming (TSDP)}\label{sectionLB_TS}
		\input{numalgo2026_LB_TS_v1.tex}

	\subsection{Iterated Local search (ILS)}\label{sectionILS}
		\input{numalgo2026_LB_ILS_v1.tex}

	\subsection{Basic heuristics}\label{sectionFSLIS}
		\input{numalgo2026_LB_FSLIS_v1.tex}

\section{\textsc{Convexification and upper bounds for \BPNS}}\label{sectionUB}
	\input{numalgo2026_UB_intro_v1.tex}

	\subsection{First upper bound using monotonicity}\label{sectionUB_mono}
		\input{numalgo2026_UB_mono_v1.tex}

	\subsection{Convexification}\label{sectionUB_convex}
		\input{numalgo2026_UB_convex_v1.tex}

	\subsection{Adapted \BnB\pht algorithm}\label{sectionUB_bnb}
		\input{numalgo2026_UB_bnb_v1.tex}

\section{\textsc{Numerical Experiments}}\label{sectionNE}
	\input{numalgo2026_EN_intro_v1.tex}

	\subsection{Experimental setup}\label{sectionNEFrame}
		\input{numalgo2026_EN_framework_v1.tex}

	\subsection{Small and medium-sized instances}\label{sectionNESmall}
		\input{numalgo2026_EN_SmallMed_v2.tex}

	\subsection{Large and very large instances}\label{sectionNELarge}
		\input{numalgo2026_EN_Large_v1.tex}

\section{\textsc{Conclusion and perspectives}}\label{sectionCCL}
	\input{numalgo2026_ccl_v3.tex}
\section*{\textsc{Declarations}}
\begin{itemize}
	\item Funding: no funding was received to conduct this study.
	\item Conflict of interest: 
	the authors have no financial or proprietary interests in any material discussed in this article.
	
	\item Data availability : data sets generated during the current study are available from the corresponding author on reasonable request. 
	
\end{itemize}

\bibliographystyle{cas-model2-names}

\bibliography{ejor_biblio_1}


\appendix
\section{\textsc{Appendix: Proof of Bellman Equation (theorem \ref{thBell}, section \ref{sectionDPExact})}}\label{sectionBellmanProof}
	\input{numalgo2025_appendix_bellman.tex}	

\section{\textsc{Appendix: Proof that the Non-Separable Integer Optimization Problem \PNS is NP }}\label{sectionNPHardProof}
	\input{numalgo2026_appendix_np.tex}	

\section{\textsc{Appendix: Additional results tables}}\label{sectionExtraTables}
	\input{numalgo2025_appendix_extratables.tex}	

\section{\textsc{Appendix: Calibration of the penalty function $g$}}\label{sectionCalibration}
	\input{numalgo2026_appendix_calibration.tex}

\section{\textsc{Appendix: List of main notations}}\label{sectionNotations}
	\input{numalgo2025_appendix_notations.tex}

\printcredits



\end{document}

%% file: numalgo2026_intro_v1.tex
In this paper, we present a solution method for a non-convex, nonlinear integer mathematical program (MINLP) in a maximization setting with a linear equality constraint. This problem originates in finance and is commonly referred to as the Large Block Sale (LBS) problem.\\It consists of selling a large predetermined quantity of a financial asset (such as a stock) over a finite time horizon, discretized into $N$ time periods, while maximizing the total proceeds from the sale. At each trading time, the realized sale price is modeled by a penalty function that reflects the level of information available to market participants and their adverse reaction to the selling pressure, as they progressively show lower bid prices that undermine the liquidation process.\\
From the mathematical programming standpoint, this problem is particularly challenging. Theoretically, the problem is in NP, as shown in Appendix \ref{sectionNPHardProof}. In practice, non-convexity requires us to adapt classical methods (e.g., \BnB, \textit{cuts}). Moreover, since no general-purpose solution algorithm is known for this class of mathematical programs, solution methods are often problem-specific.
In this work, we explore two complementary approaches. First, we analyze the problem’s structure and show that it satisfies the Bellman equation, allowing for an exact solution using dynamic programming (DP). This method proves effective for small and medium-sized instances. For larger instances, we propose DP-based heuristics that provide high-quality lower bounds.\\
Second, we reformulate the problem into an equivalent factorable form, in the sense of McCormick, and compute a tight convex relaxation by applying his inequalities. The resulting relaxation is embedded in a customized \BnB\pht algorithm that guarantees convergence to the global optimum in finite time. 
Finally, in \EN, we benchmark our results against several widely used generic solvers: \LS\pht and \Bon, which yield local optima, as well as global solvers such as \Bar, \Cou\pht and \Sci.\vspace{0.2cm}\\We now introduce our problem, denoted \PNS, and the corresponding notation as follows:\\
\begin{equation}
	\big(P_{N,M}\big)\left\{
	\begin{array}{rl}\label{pbMain0}
		\max f(x)&=\displaystyle\sum_{i=1}^{N}\Big[p_i- c_i\cdot g\big(S_i\big)\Big]\cdot x_i\\
		s.c\;\; \vline &S_N =M\\
		\vline
		&\forall i,\,0\leq x_i\leq M\\
		\vline
		&\forall i,\, x_i\in \NN 	
	\end{array}
	\right.
\end{equation}

where $S_i=\displaystyle\sum_{k=1}^{i} x_k$ is defined as the partial sum of $x$ up to $i$.\\\PNS\pht is a non-separable integer problem with one equality constraint. 
$g$ is a real-valued function called the penalty function. Coefficients $p_i$ and $c_i$ are strictly positive real numbers. Variables $(x_1,\cdots,x_i,\cdots,x_N)$ are integers.  Let $\DD = \left\{ (x_1,\ldots,x_N)\in \NN^N, \phantom{1} S_N = M \right\}$ be the admissible set. \PNS\pht is well-defined because $\DD$ is finite.\vspace{0.2cm}\\
The objective function $f$ is bounded. Indeed, we note $\overline{p}=\displaystyle\max_{i}p_i$ and obtain $0\leq f(x)\leq M\,\overline{p}$, which is a basic upper bound, independent of $g$.\\
We assume that the function $g$ is sufficiently regular, $\CC^2$ in practice, and increasing. From an economic standpoint, an increasing penalty function is a standard assumption as it reflects the memory of market participants.\\From a mathematical standpoint: 
\begin{itemize} 
	\item DP solution method works with any well-defined penalty function, as the proof of Theorem \ref{thBell} in Appendix \ref{sectionBellmanProof} does not depend on $g$, nor does the complexity.
	\item However, additional assumptions on $g$ are required, in Section \ref{sectionUB_mono}, to obtain a monotonic upper bound. $g$ must be increasing and $x\mapsto x\,g(x)$ convex.
	\item In Section \ref{sectionUB_convex}, we construct the convex hull of $g$ using Newton’s method, which, in the general case, requires that $g''$ exist. Thus, twice differentiability provides a sufficient condition on $g$.
\end{itemize}

In addition, we assume that $g(0)=0$ and $\displaystyle\lim_{+ \infty} g = 1$.
There is no loss of generality, as, for any function $h$, $\CC^2$ strictly increasing on $\RR_+$ (or at least on $[0;M]$), one can define $g$ as : $g(x)= H\cdot\frac{h(x)-h(0)}{h(M)-h(0)}$, with $H \in\, ]0;1]$. Then, we have $g(0)=0$ and $g(M)=H$, as close to $1$ as intended.

%% file: numalgo2026_littrev_contrib_v1.tex
In the mathematical programming literature, although still an open topic, non-convex MINLPs have been studied more intensively over the last decade. 
For non-convex quadratic programs, one natural idea lies in reformulating the problem into a more tractable form.  The work of \citet{bill_2012} introduces a quadratic convex reformulation (QCR).
\citet{lamb_2021} introduces a sequence of positive semidefinite matrices, whose (convex) relaxation converges to the optimum of the initial (non-convex) problem. The key ideas here are reformulation and solution methods, either through linearization or convexification. The works of \citet{sher_1998} and \citet{cafi_2012} present reformulation and linearization techniques (RLT) and extend them to polynomial programming.\\ The search for the tightest convex relaxation naturally leads to the study of convex hulls. \citet{mccor_1976} and \citet{meyer_2004_2} characterize bilinear and trilinear terms, respectively. \citet{cafi_2010} shows how to regroup lower-degree monomials, whose convex hull is known, to obtain tighter relaxations for quadrilinear terms.\\For higher-degree terms, where to the best of our knowledge, no general characterization is known, \citet{rikun_1997} shows that the convex hull of the product $x_1\,\cdots x_k$ corresponds to the set of vertices of a polyhedron, when the $x_i$ are box-constrained.\\In the general (non-polynomial, non-convex) case, no general solution is known, nor has it been proven that one does not exist. 
The seminal work of \citet{mccor_1976} introduces an equivalent reformulation for a class of factorable programs that can be expressed as sums or products of univariate functions of the original or auxiliary variables, which are themselves factorable. The author characterizes the convex hull of the reformulated problem and proposes a spatial \BnB\pht algorithm based on a sequence of progressively tighter convex relaxations, obtained as the variable domains are successively reduced. Convergence of the method to the optimum of the original problem is established up to a prescribed precision. In practice, however, despite the existence of theoretical solutions for a broad class of engineering problems, this approach can be computationally expensive.
Another interesting convexification method is proposed by \citet{andro_1995} who adds a quadratic penalty controlled by the parameter $\alpha$ to the non-convex objective function and studies the conditions under which the resulting function becomes a convex under estimator (in a minimization context). They incorporate these relaxations into a so-called $\alpha-$\BnB\pht algorithm, which reaches the global optimum. Many other methods (see for instance, the \citet{burer_2012} survey) follow the same classical scheme: transform the initial problem into a sequence of easier problems (separable, factorable, etc.), obtain a tight convex relaxation for each, and extract a sub-sequence that converges to the global optimum of the initial problem. 
However, to the best of our knowledge, our problem has not been addressed in the current literature.\\Its difficulty lies in both the non-convexity of the objective function and, more specifically, in the structure of the penalty function. Indeed, at every stage, $g$ depends on the sum of previous decision variables $x_1,\cdots,x_i$.\\We refer to this as a long-term memory model, where any given $x_i$ influences all subsequent penalties $g(S_k),\,k\geq i$. 
In the literature, we also examined alternative approaches based on recursive decomposition. If the problem structure allows us to isolate a subset of decision variables, and if, crucially, we can reconstruct a global solution from the solution of the sub-problems, then we may find an algorithm that decomposes the initial problem into smaller sub-problems, solves them independently in some manner, and combines their solutions to obtain a global optimum.   
\\In his seminal work on dynamic programming, \citet{bell_1957} defines conditions under which problem functions can be decomposed. He shows how to fix the last decision variable $x_N$, solve the sub-problems with $N-1$ decision variables independently. He establishes a recursive relation, known as the Bellman equation, between the optimum of the $N$-variable initial problem and the optima of the $N-1$ sub-problems. This recursion reduces the problem to single-variable cases, which are easier to solve, and allows the optimal solution of the initial problem to be reconstructed. We apply the Bellman equation to \PNS\pht in Section \ref{sectionDPExact} and solve it using DP.\\
Other classical examples of structure-based solution methods include the block decomposition of linear problems, with coupling constraints (e.g., \citet{dantz_1961}), or with coupling variables (e.g., \citet{bend_1962}, extended to the non-linear case by \citet{geof_1972}). For a broader discussion on non-convex global optimization, we refer the reader to the works of \citet{horst_1995} and \citet{tawa_2004}.\vspace{0.0cm}\\
In addition, the Large Block Sale problem can be viewed as an unbounded nonlinear knapsack problem with a non-separable objective function induced by cumulative decision variables. Within this context, our approach follows the spatial \BnB\pht paradigm and develops a problem-specific factorable reformulation and convex relaxation that exploit this cumulative structure, leading to tighter bounds than generic reformulation schemes. 
\vspace{1em}\\
In the financial literature, the LBS problem, which belongs to the larger class of optimal liquidation portfolio problems, has also been studied from many different angles. A first approach focuses on the market price impact of a large block sale from a statistical standpoint. Empirical studies can be found in \citet{guth_1965, dann_1977,gemm_1996}.
A second approach focuses on specific markets dedicated to such transactions, known as \textit{upstairs market}, and their more recent electronic counterpart the \textit{dark pools}. The reader can refer, for instance, to \citet{madh_1997} and more recently \citet{mish_2017}, which address LBS in these markets.\\
A third approach is to model incentives of the market participants involved in a block transaction (block trader, broker, specialist, etc.), study the price equilibrium, and derive existence and uniqueness conditions. \citet{sepp_1990} and \citet{keim_1996} provide examples.\\
A fourth type of model, where price impact is inferred from the modeling of the Limit Order Book (LOB), which ranks best bid and ask prices and their corresponding volumes, was first proposed in \citet{obiz_2005}. Refinements in that direction can be found in \citet{alfo_2008,alfo_2010} and \citet{obiz_2013}. \\
Lastly, there are also abundant references on optimal liquidation in continuous time, where LBS turns into a stochastic control problem (see \citet{barle_1994}, \citet{vath_2006}, \citet{seydel_2009}, \citet{khar_2010}, \citet{gath_2010,gath_2012}).\\ 
However, in this article we consider LBS in discrete time, with integer variables. Our central problem, how to optimally split the block into smaller orders to maximize the sale proceeds (or equivalently minimize overall price impact) was previously investigated in the seminal work of \citet{bert_1998} and \citet{almg_2000,almg_2003}.  \\
They introduce a stochastic stock price, whose dynamics account for previous trade sizes. The majority of models distinguish between the \textit{temporary} price impact, due to the temporary imbalances between supply and demand at a given point in time, and the \textit{permanent} impact which reflects the effect on share price of the information conveyed by our trading (as defined in \citet{almg_2003}). Their solution approaches are in some cases based on DP. In most cases, however, related price impact functions obey linear or power laws.\\
The \textit{market memory}, or information conveyed to market participants during the selling process also plays a crucial role. Indeed, the majority of models feature either a \textit{short memory}, in the sense that their penalty at time $t_k$ is in $g(x_k)$, depending only on the $x_k$ units traded at that time, or their long term memory is separated (e.g \citet{almg_2000}), as the penalty at time $t_k$ is in $\sum_{j=1}^{k} g(x_j)$.\\
We differ from these models by assuming deterministic asset prices, which is more restrictive, but also by considering a long-term, non-separated memory model and a much larger class of penalty functions.\vspace{0.2cm}\\
Outside of finance, a closely related question arises in the Convoy Movement Problems: in order to transport a given quantity of supplies through enemy lines, within a fixed time horizon (or with a limited number of convoys), how should the optimal convoy size be determined? See \citet{falco_1976} and \citet{mcclo_1987} for practical examples from World War II.\\\\
\textbf{Main Contributions:}
We propose two exact solution methods, based either on dynamic programming or on non-convex, non-linear mathematical programming. The main contributions of our work are as follows:
\begin{itemize}
	\item We propose a long-term memory model, with a price impact function in $g(S_k)$, for non-linear bounded functions g, which is, to the best of our knowledge, new in the financial literature, related to the LBS problem. 
	\item We adapt DP to solve our non-convex MINLP exactly for small instances.	We also introduce a two-stage DP-based heuristic for medium and large instances that provides either the optimum or very tight lower bounds.
	
	\item We develop multiple heuristic methods for computing good lower bounds for \PNS. 
	\item Using factorization and McCormick inequalities, we compute a tight convex relaxation of \PNS\pht and incorporate it into an adapted \BnB\pht algorithm, which returns either the optimum or an upper bound.  
	\item In \EN\pht related to our financial application, we benchmark our results against relevant, widely used generic solvers and obtain better results for several instances.
\end{itemize}\vspace{0.2cm}
This article is organized as follows. Section \ref{sectionLB} introduces the Bellman equation and applies dynamic programming to solve the problem exactly. It also presents several heuristics for computing lower bounds, including DP-based methods and local search, along with a discussion of their complexity.\\Section \ref{sectionUB} focuses on upper bounds. Section \ref{sectionUB_mono} first introduces a naive upper bound deduced from the monotonicity of the penalty function. Then, Section \ref{sectionUB_convex} presents an equivalent factorized formulation of the problem and computes a convex relaxation using McCormick inequalities. Finally, Section \ref{sectionUB_bnb} shows how this tight relaxation can be used within a \BnB\pht algorithm to solve the initial problem.\\Section \ref{sectionNE} presents our \EN. We compare the performance of our proposed methods against several widely used generic solvers across a range of instances derived from our financial application. Section \ref{sectionCCL} concludes and outlines directions for future research.

%% file: numalgo2026_DP_intro_v1.tex
In this section, we exploit the problem structure and apply dynamic programming (DP) to either solve \PNS\pht exactly in Section \ref{sectionDPExact}, or to derive very tight lower bounds in Section \ref{sectionLB_TS}. Lastly, in Section \ref{sectionILS}, we introduce additional heuristics based on a local search algorithm.

%% file: numalgo2026_exactDP_v1.tex
We begin by proving Bellman's equation and describing the DP algorithm, introduced by \citet{bell_1957}, which is used here to obtain the exact solution.\PNSnm\pht is a discrete problem with bounded decision variables, for $n\leq m$.\\Let $O_{n,m}$ denote its optimal value. The following result can be proven directly by induction on $n$ (see the proof in Appendix \ref{sectionBellmanProof}):\\
\begin{theorem}\label{thBell}
	$\forall n,m,\;1\leq n\leq m,\;O_{n,m}=\displaystyle\max_{i \in [\![0;m]\!]} \left\{ O_{n-1,m-i}  + \big[p_n-c_n\,g\big(m\big)\big]i\;\right\}$,\\ with
	initial conditions $O_{n,0}=O_{0,m}=0$. \vspace{0.0cm} 
\end{theorem}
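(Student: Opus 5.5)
The plan is to read \PNSnm\ as problem \PNS\ truncated to the first $n$ periods with total quantity $m$: maximize $f_n(x)=\sum_{k=1}^{n}\big[p_k-c_k\,g(S_k)\big]x_k$ over $\DD_{n,m}=\{x\in\NN^n,\ S_n=m\}$. The key structural fact is that the last term depends only on the last variable and the terminal partial sum: since $S_n=m$ is forced, the $n$-th summand equals $\big[p_n-c_n\,g(m)\big]x_n$. The first $n-1$ summands depend only on $(x_1,\ldots,x_{n-1})$, whose partial sums are unchanged, and $S_{n-1}=m-x_n$. So fixing $x_n=i$ splits the objective into a term that is constant in $(x_1,\dots,x_{n-1})$ plus the objective of \PNSnm\ with $(n,m)$ replaced by $(n-1,m-i)$.

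I will carry out the argument in the following order.

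\emph{Step 1.} Partition $\DD_{n,m}$ according to the value $i=x_n\in[\![0;m]\!]$. The map $(x_1,\ldots,x_{n-1},i)\mapsto(x_1,\ldots,x_{n-1})$ is a bijection from the slice $\{x\in\DD_{n,m},\ x_n=i\}$ onto $\DD_{n-1,m-i}$. Nonnegativity together with $S_n=m$ guarantees $0\le i\le m$, so no other values of $i$ occur.

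\emph{Step 2.} On each slice, write $f_n(x)=f_{n-1}(x_1,\ldots,x_{n-1})+\big[p_n-c_n g(m)\big]i$. Maximizing over the slice gives $O_{n-1,m-i}+\big[p_n-c_n g(m)\big]i$. Taking the maximum over the finitely many slices gives the stated formula, since a max over a finite union of sets equals the max of the maxima. This step involves no property of $g$, consistent with the remark in the introduction.

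\emph{Step 3.} Justify the initial conditions. For $m=0$, the only feasible point is $x=0$, so $O_{n,0}=0$. For $n=0$, the sum is empty. The value $O_{0,m}$ with $m>0$ has an infeasible domain, so it has to be taken as the convention $0$. Strictly speaking this is a convention rather than a derived value, and I would state it as such. It is harmless because, in the recursion, $n-1=0$ only arises from $n=1$, where the recursion should force $i=m$.

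The main obstacle is precisely this boundary bookkeeping rather than the decomposition itself. If $O_{0,m}=0$ is used for $m>0$, then for $n=1$ the recursion would allow $i<m$ and could overstate the optimum; this happens whenever $p_1-c_1g(m)<0$ fails to dominate the alternatives. I would therefore either define $O_{0,m}=-\infty$ for $m>0$ or check directly that $O_{1,m}=[p_1-c_1g(m)]m$ by treating $n=1$ as the base case. The latter approach also needs to handle the restriction $n\le m$ in the statement, since the subproblem $(n-1,m-i)$ can violate $n-1\le m-i$. For that I would note that the definition of \PNSnm\ makes sense for all $n,m\ge0$ and that the decomposition argument never uses $n\le m$.

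Because the recursion is a direct identity at each level, the induction on $n$ only serves to ensure that every $O_{n-1,\cdot}$ appearing on the right-hand side is a well-defined optimal value. Finiteness of $\DD_{n-1,m-i}$ gives this immediately.
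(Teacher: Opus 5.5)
Your argument is correct and is essentially the paper's proof. The appendix decomposes on the last variable $x_n$ through two inequalities: it restricts an optimal solution of the $(n,m)$ problem to its first $n-1$ coordinates, and it appends $i$ to an optimal solution of the $(n-1,m-i)$ subproblem, which is exactly your slice/bijection argument, and like you it never really uses an induction hypothesis. The boundary issue you flag is real, and the paper settles it at $n=1$ by invoking $p_1-c_1\,g(m)>0$ (consistent with its claim $f\geq 0$); under that condition the stated convention $O_{0,m}=0$ is harmless and no $-\infty$ convention is needed.
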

This theorem leads to the following recursive solution scheme:
\begin{algorithm}[H]
	\caption{Exact Dynamic Programming}
	\begin{algorithmic}
		\STATE \textit{\textbackslash\textbackslash Build $\big(O_{n,m}\big)$ matrix}
		\FOR {$n=1$ \TO $N$}
		\FOR {$m=1$ \TO $M$}
		\FOR {$k=0$ \TO $m$}
		\STATE $O_{n,m}= \max\Big(O_{n,m},O_{n-1,m-k}+ \big[p_n-c_n\,g\big(m\big)\big]\,k\;\Big)$ 
		\ENDFOR
		\ENDFOR
		\ENDFOR
		\STATE\textit{\textbackslash\textbackslash Compute optimal strategy by backtracking}
		\STATE $m= M$
		\FOR{$i=N$ \TO 1}
		\IF{$O_{i,m}=O_{i-1,m}$} 
		\STATE	$x_{i} = 0$
		\ELSE
		\STATE	$k= m$
		\WHILE{\Big($O_{i,m}\neq O_{n-1,m-k}+\big[p_n-c_n\,g\big(m\big)\big]\,k\Big)\;\land\;(k>0)$}
		\STATE $k= k-1$
		\ENDWHILE
		\STATE	$x_i= k$
		\STATE	$m= m-k$
		\ENDIF
		\label{algoDPExact}
		\ENDFOR
	\end{algorithmic}
\end{algorithm}
The space (memory) complexity is $O(NM)$. Filling the optimal matrix requires $O(NM^2)$ operations, while backtracking is $O(NM)$. Thus, the overall time complexity is $O(NM^2)$. Since $M$ is an integer input, the input size is  $O(N+\log M)$. Consequently, the running time is polynomial in the numerical value of $M$, but not in the size of its binary encoding, unless $M$ is bounded by a polynomial in $N$. Therefore, the time complexity of the exact DP is pseudo-polynomial. 
\vspace{0.2cm}\\
From a practitioner's standpoint, the typical float (number of shares available for sale in the market) of large companies traded on the NYSE or NASDAQ is of the order of magnitude of $10^8$ to $10^9$. Holding about $0.1$ to $1\%$ of the float, which is a significant block, translates into $M$ in the $10^5\,-\,10^8$ range.\\
The number of trading times controlled by $N$ is more subjective as it depends on the total liquidation horizon (a week to a month in practice) and on the block size relative to the average daily trading volume. For a typical blue-chip stock, liquidating a block over a month (18 trading days), 1 to 5 times a day, yields a range of $10$ to $100$ for $N$.\\
For instance, for \textit{Home Depot} (HD), a component of the Dow Jones, with a float of one billion shares, a block of 5 million shares, and an average daily volume of 4 million shares, liquidation over a week three times a day yields $(N,M)=(15,5.10^6)$, while liquidation over a month yields $(N,M)=(54,5.10^6)$\vspace{0.2cm}\\
Hence, DP remains tractable up to $N=10^2, M=10^5$. Table \ref{tabCpuCompLB} in Appendix \ref{sectionExtraTables} compares the CPU times of DP, TSDP and ILS.

%% file: numalgo2026_LB_TS_v1.tex
As the time complexity of exact DP grows as $N\,M^2$, it becomes too costly for large instances. Therefore, in this section, we present different heuristics to compute tight lower bounds within a short CPU time. We introduce our adapted DP-based technique, which we refer to as the two-step dynamic programming method (TSDP). It consists of two independent steps: a coarse-grained DP and a DP with bounds, described below: 
\begin{itemize} 
	\item \textbf{Coarse-grained DP}: when $M$ is large, exact DP is not efficient. A natural idea is to sell the asset in buckets of size $P$  $(P\gg 1)$, which we refer to as grain $P$. The solution algorithm is similar to that of DP, with a new quantity $M'=\lfloor M/P\rfloor$. Hence, the time complexity becomes $O\left(\dfrac{N\,M^2}{P^2}\right)$, which is faster by a factor of $P^2$. However, the coarser the grain, the faster the heuristic, but the lower its quality. There is a trade-off, which is analyzed in Section \ref{sectionNE_Small_DP}. 
	\item \textbf{DP with bounds}: We refine the heuristic obtained in the previous step by applying the exact DP algorithm in its neighborhood. We impose bounds on the admissible solutions in order to restrict the search space. 
\end{itemize}
\begin{algorithm}[H]
	\caption{Dynamic Programming with Bounds}
	\begin{algorithmic}
		\STATE\textit{\textbackslash\textbackslash Build $\big(O_{n,m}\big)$ matrix}
		\FOR{$n=1$ \TO $N$}
		\FOR{$m=L_n$ \TO $U_n$}
		\FOR{$k=l_n$ \TO $u_n$}
		\STATE $O_{n,m}= \max\Big(O_{n,m},O_{n-1,m-k}+ \big[p_n-c_n\,g\big(m\big)\big]\,k\;\Big)$\; 
		\ENDFOR
		\ENDFOR
		\ENDFOR
		\STATE \textit{\textbackslash\textbackslash Backtracking is identical to Algorithm (\ref{algoDPExact})}
		\STATE\textit{\textbackslash\textbackslash Get solution by backtracking in $O_{N,M}$ matrix}
		\label{algoDPBounds}
	\end{algorithmic}
\end{algorithm}
We now introduce the following definitions to compute the complexity:\\
Let $x^0=\left(x_1^0,\cdots,x_N^0\right)$ be an initial solution.\\
Let $l$ and $u$ denote the lower (resp. upper) bounds for $x$ such that: $\forall\, i\; 0 \leq l_i\leq x_i\leq u_i\leq M$\\
Let: $L_i= \min\left(\sum_{k=1}^{i} l_k, M\right)$ and $U_i=\min\left(\sum_{k=1}^{i}u_k,M\right)$. Hence, $\forall\,i\; L_i\leq S_i\leq U_i$.\vspace{0.2cm}\\ 
\textbf{Space complexity:} one has to store $O_{n,m}$ with $L_n\leq m\leq U_n$ and $1\leq n\leq N$. Hence, the complexity is $O\left(\sum_{n=1}^N U_n-L_n\right)$. If we assume that $u_n-l_n$ is bounded for all $n$ by an integer $R\leq M$, then $R$ represents the search neighborhood around a given heuristic solution. $R$ can always be defined as $R\leq M$.\\
$\forall n, U_n-L_n = \sum_{i=1}^n u_i-l_i\leq nR$, which yields a bounded space complexity of $O\left(N^2 R\right)$.\vspace{0.2cm}\\
\textbf{Time complexity:} an additional loop over $k$ is required to compute $O_{n,m}$, with $l_n\leq k\leq u_n$.\\
We obtain $\sum_{n=1}^N (U_n-L_n)(u_n-l_n)\leq R\sum_{n=1}^N (U_n-L_n)\leq R^2\sum_{n=1}^N n\leq \frac{1}{2}N(N+1)R^2$, which yields a time complexity of $O\left(N^2\,R^2\right)$. Since $R\leq M$, this complexity is bounded by $O(N^2M^2)$. Therefore, by the same argument as above, the TSDP time complexity is pseudo-polynomial.
\vspace{0.2cm}\\
Finally, the two steps in TSDP can either work synchronously, when the grain and the funnel size are controlled by the same parameter $P$, or independently, when a promising heuristic is refined through DP with bounds. However, unlike exact DP, this method no longer guarantees finding the global optimum.

%% file: numalgo2026_LB_ILS_v1.tex
DP-based solutions also face memory constraints. Since they require storing $N$ by $M$ matrices, they are not efficient for large instances. Therefore, we conclude this section by introducing a local search algorithm, which starts from an admissible solution, and returns a local maximum, while only storing the current decision variables and the objective value. 
Let $x^0=(x_1^0,x_2^0,\cdots,x_N^0)$ be an element of $\DD$. We first shift $x_1^0$ by $+P$ and $x_2^0$ by $-P$, for some fixed integer $P$, provided we stay in the feasible $[\![0;M]\!]$ domain. The solution remains admissible under the $P$-shift operator, as the sum of $x_i^0$ remains equal to $M$. We then apply the opposite shift $-P$, and so on for all pairs $(x_i,x_{i+1})$. If no transformation among these $2\,(N-1)$ possible shifts improves the objective function value, then the algorithm stops and returns the current solution $x^0$. Otherwise, the best solution found is saved as $x^1$ and the process is iterated. Therefore, the method returns a fixed point $x^{K+1}=x^K$ for the $P$-shift operator, which corresponds to a local maximum.\\
Lastly, we can apply a sequence of $P'$-shifts, with $(P'\neq P)$. For instance, we can perform a dichotomy on $P$: $P_0=2^R,\,P_1=2^{R-1},\cdots, P_R=1$, where $R=\lfloor\frac{\ln(M)}{\ln(2)}\rfloor$ is the largest integer such that a $2^R$ shift may be feasible.\\
\begin{algorithm}[H]
	\caption{Iterated Local Search}
	\begin{algorithmic}
		\STATE $runSum= 0$
		\STATE	$auxSum= 0$
		\STATE  $\Delta= 0$	
		\STATE	$f_{max}= 0$	
		\STATE	$i_{opt}= -1$
		\FOR{$i=1$ \TO N-1}
		\STATE		$d_+= 0$
		\STATE		$d_-= 0$
		\STATE		$runSum= runSum+x_i$
		\STATE		$auxSum= runSum+x_{i+1}$
		\STATE		$d_0=\big[p_i-c_i\,g(runSum)\big]\,x_i +\big[p_{i+1}-c_{i+1}\,g(auxSum)\big]\,x_{i+1}$
		\IF{($x_i+P\leq M) \land (x_{i+1}-P\geq 0)$}
		\STATE $d_+=\big[p_i-c_i\,g(runSum+P)\big]\,(x_i+P)+\big[p_{i+1}-c_{i+1}\,g(auxSum)\big]\,(x_{i+1}-P)$
		\ENDIF
		\IF{($x_{i+1}+P\leq M) \land (x_{i}-P\geq 0)$}
		\STATE $d_-=\big[p_i-c_i\,g(runSum-P)\big]\,(x_i-P)+\big[p_{i+1}-c_{i+1}\,g(auxSum)\big]\,(x_{i+1}+P)$
		\ENDIF
		\IF{$(d_+\geq d_-)$} 
		\IF{$(d_+-d_0 > \Delta)$}
		\STATE $\Delta = d_+-d_0$
		\STATE	$i_{opt}= i$
		\STATE	$\epsilon= 1$
		\ENDIF
		\ELSIF{$(d_--d_0 >\Delta)$} 	 
		\STATE $\Delta = d_--d_0$
		\STATE $i_{opt}= i$
		\STATE $\epsilon= -1$
		\ENDIF
		\IF{$i_{opt}>-1$} 
		\STATE			$x_{i_{opt}}= x_{i_{opt}} + \epsilon\cdot P$
		\STATE $x_{i_{opt}+1}= x_{i_{opt}} - \epsilon\cdot P$
		\STATE $f_{max}= f_{max} + \Delta$
		\ENDIF
		\ENDFOR	
		\label{algoILS}		
	\end{algorithmic}
\end{algorithm}
While ILS triggers a call to the objective function for every shift, we improve its efficiency by comparing only the relevant terms of $f$. Specifically, for every $i$, we increment the running sums $S_i$ and $S_{i+1}$ and compare only the affected terms of the objective function f, namely $d_0$ (no shift), $d_+$ (shift $(i,i+1,P)$) and $d_-$ (shift $(i,i+1,-P)$).
We can apply ILS either as a first-step heuristic, starting from a basic solution (cf. Section \ref{sectionFSLIS}), or as a second step to improve an existing local maximum.\vspace{0.2cm}\\
We provide an illustrative example. For $N=3,M=10,P=1$, we assume that, for instance, $(3,3,4)$ is an admissible solution and $f(3,3,4)$ is its objective value.\\ 
We start by computing the following six triplets $(2,4,4)$, $(2,3,5)$, $(3,2,5)$, $(4,2,4)$, $(4,3,3)$, $(3,4,3)$ and their respective objective values. If the maximum value is $f(3,3,4)$, the algorithm stops and returns $(3,3,4)$. Otherwise, the selected triplet, for instance $(2,3,5)$, becomes the new starting point, and the algorithm repeats by computing the next six triplets $(1,4,5)$, etc., until it reaches a fixed point.\\\\
We also notice that, when shifting consecutive terms $x_i$ by $x_i+P$ and $x_{i+1}$ by $x_{i+1}-P$, denoted $(i,i+1,P)$-shift, all the terms of $f$ involving $x_j$ $j\leq i-1$ are unchanged. Moreover, as $\pm P$-shifts cancel out, $x_i+x_{i+1}$ remains unchanged, and all the terms of order $j\geq i+2$ do not change either. Therefore the only difference in $f$ comes from the terms $d_+=\big[p_i-c_i\,g(S_i+P)\big]\,(x_i+P)+\big[p_{i+1}-c_{i+1}\,g(S_{i+1})\big]\,(x_{i+1}-P)$. Consequently, comparing $d+$ to its initial value $d_0=\big[p_i-c_i\,g(S_i)\big]\,x_i+\big[p_{i+1}-c_{i+1}\,g(S_{i+1})\big]\,x_{i+1}$ is enough to conclude whether $f$ is improved by the $(i,i+1,P)$-shift. Algorithm \ref{algoILS} simply compares every $(i,i+1,\pm P)$ shift this way, selects the maximum and iterates the process.

%% file: numalgo2026_LB_FSLIS_v1.tex
There are many ways to choose $N$ integers whose sum is $M$. We conclude this section by mentioning two intuitive (though naive) heuristics, which we use as benchmarks in numerical experiments in Section \ref{sectionNE}: 
\begin{itemize}
	\item \textbf{Fire sale} (abbreviated \textbf{FS}): the entire block is liquidated in one shot at the first time step, $x=\left(M,0,\cdots,0\right)$. We incur the maximum penalty for this panic move, and it is never optimal. \\In practice, fire sales are generally prohibited in financial markets, as they may trigger circuit breakers and halt trading.
	\item \textbf{Linear sale} (abbreviated \textbf{LIS}): the decision variables all take the same value $x=\left(M/N,\cdots,M/N\right)$. Hence, the quantity to liquidate decreases linearly with time. In financial practice, trades are typically evenly distributed over time.
\end{itemize}

%% file: numalgo2026_UB_intro_v1.tex
When an exact solution cannot be achieved, we seek upper bounds to estimate the range of the optimum. In this section, we transform \PNS\pht into an equivalent \textit{factorable} form, in the sense of McCormick. Using his inequalities, we compute a tight convex relaxation. We aim to develop either an exact solution method based on convexification or tight upper bounds. We begin by introducing a naive upper bound derived from the monotonicity of the penalty function.\vspace{0.2cm}\\
We first define convexity in the context of MINLPs. According to \citet{schm_2021}, a MINLP in a minimization context is called \textit{convex} if its continuous relaxation is a convex problem, i.e., its objective function is a convex function and its admissible set is convex. Hence in a maximization context, we can refer to a MINLP as \textit{convex}, if the continuously relaxed problem has a concave objective function and a convex admissible set. For our class of problems, the (relaxed) admissible set is $\left\{ (x_1,\ldots,x_N)\in [0,M]^N, \phantom{1} S_N = M \right\}$ which is convex, since $S_N$ is linear.

%% file: numalgo2026_UB_mono_v1.tex
The penalty function $g$ is increasing on $\RR_+$ and the variables $x_i$ are non-negative. It directly leads to:\\
\[ f(x)\leq\displaystyle\sum_{i=1}^{N}\Big[p_i- c_i\cdot g(x_i)\Big]x_i \]
We then consider the following optimization problem, denoted \PS\pht:
\begin{equation}
	\big(PS_{N,M}\big)\left\{
	\begin{array}{rl}\label{pbSep}
		\max \; U(x)&=\displaystyle\sum_{i=1}^{N}\Big[p_i- c_i\cdot g\Big(x_i\Big)\Big]\cdot x_i\\
		s.c\;\; \vline & S_N=M\\
		\vline
		&\forall i,\,0\leq x_i\leq M\\
		\vline
		&\forall i,\, x_i\in \NN 	
	\end{array}
	\right.
\end{equation}
This problem is separable and well-defined, and its solution provides an upper bound for \PNS, which we denote by \ub2\pht in the \EN.\\
We now introduce a sufficient condition on $g$ under which $U$ is concave, and consequently, \PS\pht is a convex problem.\vspace{0.2cm}
\begin{lemma}\label{lemXG}
	If the function $x\mapsto x\,g(x)$, defined on $\RR_+$, is strictly  convex, then $U$ is strictly concave and \PS\pht is convex.
\end{lemma}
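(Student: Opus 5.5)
The plan is to exploit the separable structure of $U$ and reduce everything to the one-variable hypothesis. Write $U(x)=\sum_{i=1}^N u_i(x_i)$ with
\[ u_i(t)=p_i\,t-c_i\,t\,g(t),\qquad t\in[0,M]. \]
Let $\varphi(t)=t\,g(t)$, which is strictly convex on $\RR_+$ by assumption. Since $c_i>0$, the function $-c_i\varphi$ is strictly concave. Adding the linear term $p_i t$ does not affect concavity, so each $u_i$ is strictly concave on $[0,M]$. If one prefers the $\CC^2$ setting assumed for $g$, the same conclusion follows from $u_i''=-c_i\varphi''\le 0$. However, I would argue directly from the definition of strict convexity, so that no condition of the form $\varphi''>0$ is needed.

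Next I show that the separable sum is strictly concave on $[0,M]^N$. Take $x\neq y$ in $[0,M]^N$ and $\lambda\in\,]0;1[$. For every $i$, the concavity of $u_i$ gives
\[ u_i(\lambda x_i+(1-\lambda)y_i)\ge \lambda u_i(x_i)+(1-\lambda)u_i(y_i). \]
There is at least one index $j$ with $x_j\neq y_j$, and for that index strict concavity of $u_j$ makes the inequality strict. Summing over $i$ yields $U(\lambda x+(1-\lambda)y)>\lambda U(x)+(1-\lambda)U(y)$, which is strict concavity of $U$. This is the only step needing care: a sum of strictly concave univariate functions of distinct coordinates is strictly concave jointly only because a single differing coordinate suffices to make the inequality strict. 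That requires strict concavity of every $u_i$, which is exactly where $c_i>0$ is used.

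Finally, I apply the definition of a convex MINLP in a maximization setting recalled above, following \citet{schm_2021}. The continuous relaxation of \PS\pht maximizes $U$ over $\{x\in[0,M]^N,\ S_N=M\}$. As already noted, this set is convex, being the intersection of a box with a hyperplane. Its objective $U$ is concave by the previous step. Hence \PS\pht is convex. As a side remark, strict concavity also gives uniqueness of the relaxed maximizer, although the lemma does not require it.
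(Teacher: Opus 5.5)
Your proof is correct and follows the same route as the paper. Each univariate term $p_i t - c_i\,t\,g(t)$ is concave as a linear function plus a concave one (using $c_i>0$), and $U$ is then concave as a separable sum; you also supply two things the paper's two-line proof leaves implicit, namely the strictness argument via a coordinate with $x_j\neq y_j$ and the convexity of the relaxed feasible set.
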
 
\begin{pf}
	If $x\mapsto x\cdot g$ is convex then, for all $i$, since $c_i>0, \;x_i\mapsto p_i\,x_i-c_i\,x_i\,g(x_i)$ is concave as the sum of a linear function $x_i\mapsto p_i\,x_i$ and a concave function of $x_i$.
	Therefore, $U$ is concave as the sum of concave functions of $x_i$.
\end{pf}	
Under the assumption of lemma \ref{lemXG}, \PS\pht is convex. In addition, the constraint qualification is satisfied due to the linearity of the constraint. Therefore, the Lagrange equations provide a global maximum. This leads to the following results:
\begin{theorem}\label{lemCstPrice}
	Under the assumption of Lemma \ref{lemXG}, if the price vector $p$ and the cost coefficients $c$ are constant, the optimal strategy corresponds to the linear sale $\hat{x}=(\frac{M}{N},\cdots,\frac{M}{N})$ and the global maximum is $\mbox{\ub2}=M\,\Big[p-c\,g\big(\frac{M}{N}\big)\Big]$.\\

\end{theorem}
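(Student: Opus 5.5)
The plan is to work with the continuous relaxation of \PS, namely maximizing $U(x)=\sum_{i=1}^N\big[p-c\,g(x_i)\big]x_i$ over $\left\{x\in[0,M]^N,\ S_N=M\right\}$. With $p_i=p$ and $c_i=c$ constant, $U(x)=\sum_{i=1}^N \varphi(x_i)$, where $\varphi(t)=p\,t-c\,t\,g(t)$ is the same function in every coordinate. By Lemma \ref{lemXG}, $\varphi$ is strictly concave on $\RR_+$, so $U$ is strictly concave and the relaxed problem is convex. Hence it has at most one maximizer. Because $U$ is continuous on a compact set, a maximizer exists, so it is unique.

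To identify this maximizer, I would avoid the Lagrange computation and argue directly by symmetry, which handles the boundary constraints automatically. The objective $U$ and the feasible set are both invariant under permutations of coordinates. So if $x^*$ is the maximizer, every permutation of $x^*$ is also a maximizer. Uniqueness then forces all coordinates to be equal. An alternative is Jensen's inequality: $\frac1N\sum\varphi(x_i)\leq\varphi\big(\frac1N\sum x_i\big)=\varphi\big(\frac MN\big)$, with equality if and only if all $x_i$ are equal. Either way, the constraint $S_N=M$ gives $\hat x=(\frac MN,\dots,\frac MN)$. This point is feasible since $0\le \frac MN\le M$.

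For a cross-check with the paper's remark, the Lagrange condition $\varphi'(x_i)=\lambda$ for all $i$ leads to the same conclusion. Strict concavity makes $\varphi'$ strictly decreasing, hence injective, so all $x_i$ must be equal. The remark that constraint qualification holds and that this stationary point is global is then exactly the convexity argument above.

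Substituting $\hat x$ gives $U(\hat x)=N\cdot\frac MN\big[p-c\,g(\frac MN)\big]=M\big[p-c\,g(\frac MN)\big]$, which is \ub2.

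The main subtlety is integrality. The statement is really about the relaxation, or about the case where $N$ divides $M$. If $N\mid M$, then $\hat x\in\NN^N$, and since it is optimal over a larger set, it is optimal for \PS\ itself. If $N\nmid M$, the value $M\big[p-c\,g(\frac MN)\big]$ remains a valid upper bound for \PS, and hence for \PNS. I would state this interpretation explicitly, noting that the integer optimum would then be the balanced split with entries $\lfloor M/N\rfloor$ and $\lceil M/N\rceil$. That last claim follows from a standard exchange argument using concavity of $\varphi$, but it is not needed for the statement.
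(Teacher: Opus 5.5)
Your proof is correct, but it reaches the linear sale by a different route than the paper. The paper writes the Lagrangian of \PS, obtains the stationarity system $[x\,g]'(x_i)=\frac{p-\lambda}{c}$, and uses strict convexity of $x\mapsto x\,g(x)$ to make $[x\,g]'$ injective, so all $x_i$ coincide. It then appeals to concavity of $U$ and the linear constraint qualification to declare this stationary point a global maximum.

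You use strict concavity of $\varphi$ differently: through uniqueness of the maximizer (existence by compactness) combined with permutation invariance, or directly through the equality case of Jensen's inequality. This needs neither derivatives nor a check that the bounds $0\le x_i\le M$ are inactive. It also bypasses the paper's intermediate step of writing $x\,g(x)=\frac{p-\lambda}{c}\,x+b$ and splitting into $x_i=0$ or $x_i=g^{-1}(\cdot)$. That step is not needed, and read literally it integrates a condition that only holds at the points $x_i$. Your Lagrange cross-check isolates exactly the injectivity argument that actually carries the paper's proof.

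What the first-order approach buys in return is independence from every coordinate sharing the same function. The KKT conditions still characterize the optimum of the relaxation when the $p_i,c_i$ vary. Symmetry and Jensen are tied to the constant case, so in the varying case you would have to reduce to it via $\overline{p},\underline{c}$, as the paper's corollary does.

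Finally, your integrality remark is a genuine sharpening. The paper's argument silently works with the continuous relaxation, and $\hat x\in\NN^N$ only when $N\mid M$. Otherwise $M\big[p-c\,g(\frac{M}{N})\big]$ is an upper bound for \PS\ (and \PNS) rather than an attained integer optimum.
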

\begin{corollary}
	If the vector $p$ and the cost coefficients $c$ vary, we get an upper bound for \PS. Indeed, let $\overline{p}=\displaystyle\max_i{p_i}$ and $\underline{c}=\displaystyle\min_{i}{c_i}$. Then , for all $i$: $\big[p_i-c_i\,g(x_i)\big]x_i\leq\big[\overline{p}-\underline{c}\,\overline{g}(x_i)\big]x_i$ and theorem \ref{lemCstPrice} applies.\vspace{-0.1cm}\\
\end{corollary}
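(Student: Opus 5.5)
The plan is to bound the separable objective $U$ of \PS\ termwise by a separable objective with constant coefficients, and then apply Theorem \ref{lemCstPrice} to that majorant. I read $\overline{g}$ in the statement as $g$ itself; if another majorant of $g$ is intended, only the sign argument below has to be repeated for it.

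First, the termwise inequality. Fix $i$ and an admissible $x$, so $x_i\ge 0$. Because $g$ is increasing with $g(0)=0$, we have $g(x_i)\ge 0$. Together with $c_i\ge \underline{c}>0$ this gives $c_i\,g(x_i)\ge \underline{c}\,g(x_i)$. We also have $p_i\le \overline{p}$. Hence
\[ p_i-c_i\,g(x_i)\le \overline{p}-\underline{c}\,g(x_i). \]
Multiplying by $x_i\ge 0$ preserves the inequality, and summing over $i$ yields $U(x)\le \widetilde U(x):=\sum_{i=1}^N\big[\overline{p}-\underline{c}\,g(x_i)\big]x_i$ for every $x\in\DD$. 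Consequently the optimum of \PS\ is at most $\max_{x\in\DD}\widetilde U(x)$.

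Second, the majorant problem. Maximizing $\widetilde U$ under $S_N=M$ is exactly \PS\ with the constant price $p=\overline{p}$ and constant cost $c=\underline{c}>0$. Under the hypothesis of Lemma \ref{lemXG}, Theorem \ref{lemCstPrice} then gives the maximizer $\hat x=(\frac MN,\ldots,\frac MN)$ and the value $M\big[\overline{p}-\underline{c}\,g(\frac MN)\big]$. This value is therefore an upper bound for \PS, and by Section \ref{sectionUB_mono} also for \PNS.

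The main point needing care is that Theorem \ref{lemCstPrice} is obtained from the Lagrange conditions, that is, for the continuous relaxation over $\{x\in[0,M]^N,\ S_N=M\}$. When $N\nmid M$, the point $\hat x$ is not integer. I would handle this by noting that $\DD$ is contained in the relaxed feasible set. The integer maximum of $\widetilde U$ is thus at most the relaxed maximum, and the relaxed maximum is what Theorem \ref{lemCstPrice} computes, so the bound remains valid in every case. The strict concavity from Lemma \ref{lemXG} is used only through Theorem \ref{lemCstPrice}, which guarantees that the Lagrange stationary point is the global maximum of the relaxation.
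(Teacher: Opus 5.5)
Your proposal is correct and follows the paper's own argument, which is Case~2 in the proof of Theorem~\ref{lemCstPrice}. Both bound each term by $\big[\overline{p}-\underline{c}\,g(x_i)\big]x_i$ and then apply the constant-coefficient case to get $M\big[\overline{p}-\underline{c}\,g(\frac{M}{N})\big]$. You also supply two details the paper leaves implicit: $g\geq 0$ is what justifies the termwise step, and the Lagrange computation bounds the integer problem only through its continuous relaxation, which matters when $N$ does not divide $M$.
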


\begin{pf}
	We define the Lagrangian $L(x,\lambda)$ as: 
	\begin{align*}
		&L(x,\lambda)=\displaystyle\sum_{i=1}^{N}\Big[p_i- c_i\cdot g(x_i)\Big]x_i - \lambda\big(S_N - M\big)\\
		&\nabla L(x,\lambda)=0 \iff \left\{
		\begin{array}{l}
			\forall i, \big[x\,g\big]'(x_i)=\dfrac{p_i-\lambda}{c_i}\\
			S_N-M = 0 \\
		\end{array}
		\right. 
	\end{align*}
	We consider the real function $x\mapsto x\,g(x)$:
	\begin{align*}
		&\mbox{We have: } x\,g(x)=\dfrac{p_i-\lambda}{c_i}\,x+b\\
		&\mbox{Since } g\mbox{ is continuous at 0}\mbox{, then: } b=0\\
		&\mbox{Which yields: }x\,g(x)=\dfrac{p_i-\lambda}{c_i}\,x
	\end{align*}
	There are two cases depending on the price vector $p$ and the cost coefficients $c$.\vspace{0.2cm}\\
	\textbf{Case 1:} the vector $p$ and the coefficients $c$ are constant.\\
	\noindent Lagrange equations yield $x\,g(x)=\dfrac{p-\lambda}{c}\,x$. 
	$g$ is strictly increasing and continuous on $\RR_+$; so it is injective.\\Hence, for a fixed $i$, either $x_i=0$ or $x_i=g^{-1}\big(\frac{p-\lambda}{c}\big)$, which is a constant value independent of $i$.\\In addition, under the assumptions of lemma\ref{lemXG}, $x\cdot g$ is strictly convex, hence $[x\cdot g]'$ is strictly increasing on $\RR_+$ and therefore injective.    
	Let $x=(x_1,\cdots,x_N)$ satisfying Lagrange equations and $i<j$:\vspace{0.1cm}\\
	$\big[x\,\overline{g}\big]'(x_i)=\big[x\,\overline{g}\big]'(x_j)=\dfrac{p-\lambda}{c}$ holds, hence $x_i=x_j$. Since the null vector does not satisfy the constraint, we are left with the linear sale $\forall, i,\; x_i=\frac{M}{N},\lambda=p-c\,g\big(\frac{M}{N}\big)$, and by direct computation of $U$, we obtain \ub2.\vspace{0.2cm}\\
	\textbf{Case 2:} the vector $p$ and the coefficients $c$ vary. We can still get an upper bound for \PS.\\ Indeed, let $\overline{p}=\displaystyle\max_i{p_i}$ and $\underline{c}=\displaystyle\min_{i}{c_i}$. Then, for all $i$: $\big[p_i-c_i\,g(x_i)\big]x_i\leq\big[\overline{p}-\underline{c}\,\overline{g}(x_i)\big]x_i$ \vspace{0.0cm}\\
	Applying the previous result to the right-hand side yields:
	$\mbox{\ub2}\leq\; M\,\Big[\overline{p}-\underline{c}\,g \Big(\frac{M}{N}\Big)\Big]$
\end{pf}

%% file: numalgo2026_UB_convex_v1.tex
We now present a convexification method, based on \citet{mccor_1976}, which provides a general strategy for a wide class of functions called \textit{factorable} functions. We begin with an applicable definition for our mathematical program.
\begin{definition}\label{defFactorable}
	A function $f:\RR^n\mapsto\RR$ is said to be factorable if it can be expressed as the sum of products of univariate continuous functions included in a set $\mathcal{O}$, and whose arguments are either variables, constants, or other factorable functions.  
\end{definition}
An applicable set for our \EN\pht is, for instance, $\mathcal{O}=\{+,x,/,$\textasciicircum{}$,exp, log, sin, cos, asin, acos, atan\}$ (see \citet{mccor_1976} for a more general definition).\\
We seek to establish a convex relaxation of \PNS, in the sense of the definition of section \ref{sectionUB}, using an equivalent factorable form of \PNS. Hence, we factor \PNS\pht (cf. \cite{phdniz_2023} p76-82 for more details) and obtain the maximization problem below. In general, a factorization is not unique.\\
\begin{theorem}
A factorable equivalent reformulation  of \PNS\pht is:
\begin{equation}
	\left\{
	\begin{array}{rcl}\label{pbFactoN}
		\displaystyle\max \;X_{2N}=& \Big(\displaystyle\sum_{i=1}^N p_i\, X_i\Big)& -\Big(c_1\,g(X_1)\,X_1 +c_N\,g(M)\cdot X_N\Big)\\ &&+\displaystyle\sum_{i=2}^{N-1}(-c_i)\cdot\big[X_i\,g(X_{N+i-1})\big]\\
		X_i=&x_i &,\;1\leq i\leq N\\
		X_{N+i-1}=&\displaystyle\sum_{j=1}^i X_j,\;&2\leq i\leq N-1\\
		X_{2N-1}=&\big(\displaystyle\sum_{j=1}^N X_j\big)=&M\\
		&0\leq X_i\leq M &,1\leq i\leq 2N-1\\
		&\forall i,\, X_i\in \NN &	
	\end{array}
	\right.
\end{equation}
\end{theorem}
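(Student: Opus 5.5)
We will show that the map $x\mapsto X$ defined by the equality constraints of (\ref{pbFactoN}) is a bijection between the admissible set $\DD$ of \PNS\pht and the admissible set of (\ref{pbFactoN}), and that it preserves the objective value. Equivalence then follows, since optimal values and optimal solutions correspond.

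\textbf{Feasibility, forward direction.} Given $x\in\DD$, set $X_i=x_i$ for $1\leq i\leq N$ and $X_{N+i-1}=S_i$ for $2\leq i\leq N$. Note that the last auxiliary index is $X_{2N-1}=S_N$. Integrality holds because each $X$ is an integer sum of integers. For the bounds $0\leq X_k\leq M$, nonnegativity of the $x_i$ gives $0\leq S_i\leq S_N=M$, and likewise $0\leq x_i\leq M$. The constraint $X_{2N-1}=M$ is exactly $S_N=M$.

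\textbf{Feasibility, backward direction.} Given $X$ feasible for (\ref{pbFactoN}), the defining equalities force $X_{N+i-1}=S_i$ with $x_i=X_i$. Hence $x\in\NN^N$, each $x_i$ lies in $[0,M]$, and $S_N=M$, so $x\in\DD$. The two maps are mutually inverse because the auxiliary variables are determined by the $X_i$. This is the sense in which ``equivalent'' should be read: the auxiliary variables add no freedom.

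\textbf{Objective.} Expand $f(x)=\sum_i p_i x_i-\sum_i c_i g(S_i)x_i$ and treat the terms in three groups:
\begin{itemize}
\item $i=1$: here $S_1=x_1=X_1$, giving $c_1 g(X_1)X_1$.
\item $i=N$: here $S_N=M$ by the constraint, giving the term $c_N g(M)X_N$, which is linear in $X_N$.
\item $2\leq i\leq N-1$: here $S_i=X_{N+i-1}$, giving $c_i X_i g(X_{N+i-1})$.
\end{itemize}
Summing these reproduces $X_{2N}$, so the objective values coincide under the bijection. (For $N=1$ the middle sum is empty; the boundary terms then both refer to $X_1=M$, and one should interpret the formula with a single boundary term. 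We will note this degenerate case separately.)

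\textbf{Factorability.} Finally, we check that the objective and constraints are factorable in the sense of Definition~\ref{defFactorable}. Each objective term is a product of the univariate functions $g(\cdot)$ and the identity applied to single variables. The constraints are linear. Therefore (\ref{pbFactoN}) is a factorable program.

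The only delicate point is bookkeeping: the index shift $N+i-1$ and the special handling of $i=1$ and $i=N$, where $S_1=X_1$ and $S_N=M$ remove the need for auxiliary variables. The main obstacle is verifying that no auxiliary variable is left unconstrained or duplicated, since $X_{2N-1}$ plays the role of $S_N$. The argument is routine once the bijection is written explicitly.
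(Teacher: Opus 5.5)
Your proof is correct and takes the same route as the paper. The paper only asserts the reformulation and defers details to the thesis; what it implicitly does, as you do, is introduce auxiliary variables for the partial sums $S_2,\dots,S_N$ and use $S_1=X_1$ and $S_N=M$ at the two ends, and you additionally check that this substitution is a value-preserving bijection between the feasible sets. Your remark on $N=1$ is also right: the two boundary terms then coincide and the stated objective double-counts $c_1\,g(M)\,M$, so the statement implicitly assumes $N\geq 2$.
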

We consider the continuous relaxation of this reformulation and use the McCormick inequalities to obtain a concave overestimation of its objective function.\\
This reformulation enables us to isolate non-concave terms of the objective function $X_{2N}$, which consists of the univariate function $h:x\mapsto -c_1\,x\cdot g(x)$, and the $(N-2)$ product terms $(X_i,X_j)\mapsto -c_i\,X_i\,g(X_j)$. In addition, the $(N-1)$ constraints are also linear.\\
For a regular (i.e $\mathcal{C}^2$) univariate function $g$, on a bounded segment $[L;U]$, the concave hull of $h$ can be built directly. 
To achieve this, we start by splitting the interval $[L;U]$ into intervals of monotonic convexity for $h$. Using real analysis, we look for inflection points of $h$, where $h''$ admits a root and switches sign, or equivalently where the tangent passes through its graph, which can be formulated: $\exists (a,x)\in[L;U],/ \frac{h(x)-h(a)}{x-a}=h'(x)$,
where $a$ is a given point in the definition interval.\vspace{0.1cm}\\Root finding, whether analytically or by numerical methods, has been studied extensively.\\
We propose here, as in \citet{mccor_1976}, a simple numerical solution relying on Newton's method:\\
We seek roots of the function $H(x)=h(x)-h(a)-(x-a)\,h'(x)$\\
We get: $H'(x)=h'(x)-(x-a)\,h''(x)-h'(x)=-(x-a)\,h''(x)$\\
Newton's method introduces a sequence of $(x_k)_{k\in\NN}$, with $x_0$ and $x_{k+1}=x_k -\frac{H(x_k)}{H'(x_k)}$\\
Thus, we obtain the following recursive relation:  $x_{k+1}=x_k+\frac{1}{h''(x_k)}\Big[\frac{h(x_k)-h(a)}{x_k-a}-h'(x_k)\Big]$\vspace{0.1cm}\\
Consequently, we can split  $[L;U]$ into $[L=l_1;l_2;\cdots;l_i;\cdots l_k=L]$, such that $h$ has monotonic convexity (either strictly convex or concave)  on $[l_i;l_{i+1}]$ for all $i$. Therefore, in a maximization context, we build the concave hull in the following manner:
\begin{itemize}
	\item On concave intervals, $h$ is its own concave hull.
	\item On convex intervals, $[l_{i},l_{i+1}]$, the concave hull corresponds to the segment that connects the points $\big(l_i;h(l_i)\big)$ and $\big(l_{i+1};h(l_{i+1})\big)$. 
\end{itemize}
The reader can refer to \citet{falk_1969} and \citet{horst_1990} p145, for a thorough proof and a geometric interpretation of the convex hull of regular univariate functions $h$.\\
In the remainder of the paper, we will assume, as in Lemma \ref{lemXG} from the previous section, that $h$ is concave. In \EN, we also choose penalty functions $g$ that satisfy this assumption.\vspace{0.2cm}\\
We now tackle the more interesting case of a product of univariate bounded functions, using the McCormick inequalities below:\vspace{0.2cm}
\begin{theorem}\label{thProdMcCo}
	Let U and V be two real univariate functions defined on $[a;b]$, bounded by: $U_a\leq U(x)\leq U_b$ and $V_a\leq V(x)\leq V_b$. Then the following four inequalities apply for all $x$ in $[a;b]$:\\
	$$ U.\,V\geq U\,V_b+U_b\,V-U_b\,V_b$$ 
	$$ U.\,V\geq U\,V_a+U_a\,V-U_a\,V_a$$ 
	$$ U.\,V\leq U_b\,V+U\,V_a-U_b\,V_a$$ 
	$$ U.\,V\leq U_a\,V+U\,V_b-U_a\,V_b$$
	And consequently:
	$$ U.\,V\geq \max\Big(U\,V_b+U_b\,V-U_b\,V_b\,,\,U\,V_a+U_a\,V-U_a\,V_a\Big)$$\vspace{-0.5cm}
	$$ U.\,V\leq \min\Big(U_b\,V+U\,V_a-U_b\,V_a\,,\,U_a\,V+U\,V_b-U_a\,V_b\Big)$$	
\end{theorem}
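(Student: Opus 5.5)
The plan is to derive each of the four inequalities pointwise, for a fixed $x\in[a;b]$, from the sign of a product of two factors. Write $U=U(x)$ and $V=V(x)$. The hypotheses $U_a\leq U\leq U_b$ and $V_a\leq V\leq V_b$ give four non-negative quantities: $U-U_a$, $U_b-U$, $V-V_a$ and $V_b-V$. Pairing one $U$-factor with one $V$-factor gives four products that are all $\geq 0$. Expanding each product and isolating the bilinear term $U\,V$ should give exactly one of the stated inequalities. No regularity of $U$ or $V$ (continuity, differentiability) is needed, since everything is evaluated at a single point.

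The key steps, in order, are as follows.
\begin{enumerate}
	\item From $(U_b-U)(V_b-V)\geq 0$ we get $U_bV_b-U_bV-UV_b+UV\geq 0$, that is $U\,V\geq U\,V_b+U_b\,V-U_b\,V_b$. This is the first inequality.
	\item From $(U-U_a)(V-V_a)\geq 0$ we get $U\,V\geq U\,V_a+U_a\,V-U_a\,V_a$. This is the second inequality.
	\item From $(U_b-U)(V-V_a)\geq 0$ we get $U_bV-U_bV_a-UV+UV_a\geq 0$, that is $U\,V\leq U_b\,V+U\,V_a-U_b\,V_a$. This is the third inequality.
	\item From $(U-U_a)(V_b-V)\geq 0$ we get $UV_b-UV-U_aV_b+U_aV\geq 0$, that is $U\,V\leq U_a\,V+U\,V_b-U_a\,V_b$. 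This is the fourth inequality.
	\item Finally, since $U\,V$ lies above both lower bounds, it lies above their maximum. Likewise, it lies below both upper bounds and hence below their minimum. This gives the two combined statements.
\end{enumerate}

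There is no genuine obstacle here. The only point requiring care is bookkeeping: choosing, for each inequality, the pairing of factors so that the product is non-negative, and keeping track of signs when rearranging the expansion. The pairings of the form (upper, upper) and (lower, lower) yield the underestimators. The mixed pairings yield the overestimators.

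I would also remark, for later use in the relaxation, that these bounds are affine in $(U,V)$. With $U=X_i$ and $V=g(X_{N+i-1})$, they therefore provide the concave overestimators (and convex underestimators) of the product terms in the factorable reformulation.
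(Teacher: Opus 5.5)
Your proof is correct and is exactly the paper's argument: the paper expands $(U_b-U(x))(V_b-V(x))\geq 0$ and says the other three inequalities follow the same way, which you carry out explicitly with the right pairings and signs. One caveat on your closing remark, which the theorem does not need: being affine in $(U,V)$ does not by itself make these bounds concave in the original variables when $V$ is the nonlinear $g(X_{N+i-1})$ (the paper in fact takes $V=-g(X_{N+i-1})$), which is why Corollary~\ref{corolProdMC} replaces $V$ by its concave or convex hull according to the sign of its coefficient.
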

\begin{pf}
	$\forall x\in [a;b], U_b-U(x)\geq 0$ and $V_b-V(x)\geq 0$. \\Hence $(U_b-U(x))\,(V_b-V(x))\geq 0$. We proceed similarly for the other three inequalities. 
\end{pf}

We combine the previous results in the following corollary to obtain a convex under-estimation (resp. concave over-estimation) of the product $U\circ u(x).\,V\circ v(x)$, where $U,V$ are univariate functions :
\begin{corollary}\label{corolProdMC}
	Let $U,V$ be univariate bounded functions on $[a;b]$. \\Let $e_U$ (resp. $E_U$) and $e_V$ (resp. $E_V$) be their convex (resp. concave) hull function on $[a;b]$: \vspace{-0.2cm}
\[
U(x).\,V(x)\geq \max\Big(e_{U}(x)\,\,V_b+U_b\,e_{V}(x)-U_b\,V_b,\,e_{U}(x)\,V_a+U_a\,e_{V}(x)-U_a\,V_a\Big)
\]\vspace{-0.6cm}\\
and\vspace{-0.1cm}
\[
U(x).\,V(x)\leq \min \Big(U_b\,E_{V}(x)+E_{U}(x)\,V_a-U_b\,V_a\,,\,U_a\,E_{V}(x)+E_{U}(x)\,V_b-U_a\,V_b\Big)
\]
\end{corollary}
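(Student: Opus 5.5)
The plan is to derive Corollary \ref{corolProdMC} directly from Theorem \ref{thProdMcCo}, by replacing each factor with its convex (resp. concave) hull inside the four McCormick inequalities. Recall that $e_U\leq U\leq E_U$ and $e_V\leq V\leq E_V$ on $[a;b]$ by definition of the hulls. The hulls also inherit the bounds: $U_a\leq e_U$ because the constant $U_a$ is a convex minorant of $U$, and $E_U\leq U_b$ because the constant $U_b$ is a concave majorant. The same holds for $V$.

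\textbf{Step 1 (lower bound).} Start from $U\,V\geq U\,V_b+U_b\,V-U_b\,V_b$. The right-hand side is affine in the pair $(U,V)$, with coefficient $V_b$ on $U$ and $U_b$ on $V$. If $V_b\geq 0$ and $U_b\geq 0$, replacing $U$ by $e_U\leq U$ and $V$ by $e_V\leq V$ can only decrease it. This gives $U\,V\geq e_U\,V_b+U_b\,e_V-U_b\,V_b$. The second McCormick lower inequality is treated identically, with coefficients $V_a,U_a$. Taking the maximum of the two bounds yields the first claim.

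\textbf{Step 2 (upper bound).} Symmetrically, start from $U\,V\leq U_b\,V+U\,V_a-U_b\,V_a$. Provided $U_b\geq 0$ and $V_a\geq 0$, replacing $V$ by $E_V\geq V$ and $U$ by $E_U\geq U$ only increases the right-hand side. Doing the same for the fourth inequality and taking the minimum gives the second claim.

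\textbf{Step 3 (convexity of the estimators).} Each lower estimator is a nonnegative combination of the convex functions $e_U,e_V$ plus a constant, hence convex, and a maximum of convex functions is convex. Likewise, each upper estimator is concave, and a minimum of concave functions is concave. So the bounds are indeed a convex under-estimator and a concave over-estimator, which is what the relaxation requires.

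\textbf{Main obstacle: sign conditions.} The monotone substitution in Steps 1 and 2 needs the coefficients $U_a,U_b,V_a,V_b$ to be nonnegative. Without this the inequalities can reverse, for instance when $V_b<0$ and $e_U<U$. I would therefore state explicitly that the bounds are nonnegative, which holds in our application. There $U=X_i\in[0;M]$ and $V=g(X_{N+i-1})\in[0;1]$, since $g$ is increasing with $g(0)=0$ and $g<1$. Moreover, $X_i$ is linear and hence its own hull. For general signs one would have to swap in the opposite hull wherever a coefficient is negative, for example using $E_U$ in place of $e_U$ when $V_b<0$; I would mention this only as a remark.
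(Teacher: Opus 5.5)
Your argument is correct and is essentially the paper's own: the corollary is obtained by substituting the hulls term by term into the McCormick inequalities of Theorem~\ref{thProdMcCo}. Your sign caveat is exactly the paper's remark after the corollary, that one must pick $E$ or $e$ according to the signs of $U_a,U_b,V_a,V_b$, and it is genuinely needed: for instance, $V\equiv -1$ with a non-convex $U$ breaks the literal lower bound. One small correction to your remark on the application: the paper instantiates the corollary with $V=-g(X_{N+i-1})\in[-H;0]$, not $V=g\ge 0$, and relies on the linearity of $X_i$ to make the negative $V_a$ harmless; your choice $V=g$, bounding $X_i\,g$ from below, is nonetheless valid and yields the same relaxation $\big(PC_{N,M}\big)$.
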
	
In a maximization setting, we focus on the last inequality of Theorem \ref{thProdMcCo}:
\begin{equation}\label{eqIneqMcCo}
	U.\,V\leq \min\Big(U_b\,V+U\,V_a-U_b\,V_a\,,\,U_a\,V+U\,V_b-U_a\,V_b\Big)
\end{equation}
Let us consider its first term $U_b\,{V}(x)$. 
By definition of the convex (resp.) concave hull, $e_{V}(x)\leq V(x)\leq E_{V}(x)$, for $x\in [a;b]$. If $U_b\geq 0$ then, $U_b\,V(x)\leq U_b\,E_V(x)$, so $U_b\,V$ 
can be replaced by the concave function $U_b\,E_V$ in equation \ref{eqIneqMcCo}.
Conversely, if $U_b< 0$ then $U_b\,V(x)\leq U_b\,e_V(x)$ which is also concave. In that case, $U_b\,V$ can be replaced by  $U_b\,e_V$.\\
By symmetry, we can apply the same reasoning to every component of the inequality \ref{eqIneqMcCo}'s right-hand side.\vspace{0.2cm}\\
Therefore, we can, without loss of generality, adapt theorem \ref{thProdMcCo} according to the sign of bounds $U_a,\;U_b,\;V_a,\;V_b$ to obtain, a concave over-estimation of the product, in a maximization context (and conversely for a minimization).  
Equipped with \citet{mccor_1976} results, we now have the necessary tools to compute a convex relaxation of \PNS\pht in the general case.
Before addressing the general case, we first apply the previous results to the case $N=3$:\\ 
\begin{equation*}
	\big(P_{3,M}\big)\left\{
	\begin{array}{ll}
		\displaystyle\max \;&X_{6}= \Big(\displaystyle\sum_{i=1}^3 p_i\, X_i\Big) -\Big(c_1\,g(X_1)\,X_1 +c_3\,g(M)\cdot X_3\Big) -c_2\cdot X_2\,g(X_{4})\\
		X_{4}=&X_1+X_2,\;2\leq i\leq N-1\\
		X_{5}=&X_1+X_2+X_3=M\\
		0\leq &X_i\leq M \\
		\forall i& X_i\in \NN 	
	\end{array}
	\right.
\end{equation*}
In the general case, the interval $[0;M]$ can be partitioned into sub-intervals of monotonic convexity. On concave sub-intervals, $g$ coincides with its concave hull, whereas on convex sub-intervals $[l_i;l_{i+1}]$, the concave hull of ($g$ is given by the affine function $\alpha_i x + \beta_i$ interpolating the points $A\big(l_i, g(l_i)\big)$ and $B\big(l_{i+1}, g(l_{i+1})\big)$. This uniquely determines the coefficients $\alpha_i$ and $\beta_i$.\\
In the present setting, we restrict attention to a single interval and assume that $g$ is concave on $[0;M]$. We further recall that $g(M)=H$, where the calibration constant is set to $H=0.99$ in \EN. This assumption involves no loss of generality, since the same arguments apply to finer subdivisions of the interval.\\
Here, the non-convex terms are products $-c_2 X_2g(X_4)$ and $-c_1 X_1g(X_1)$.\\
Let $A_2\equiv -c_2\,X_2\, g(X_4), c_2>0$ and $U=X_2$ and $V=-g(X_4)$. We can apply theorem \ref{thProdMcCo} to the product $X_2\,g(X_4)$, with the following bound assumptions:\\ 
\[
\begin{array}{ll}
	U_a=0\leq X_2\leq U_b=M&\\
	V_a=-g(M)=-H\leq -g(X_4)\leq -g(0)=0=V_b&\\\\
\end{array} 
\]
We apply corollary \ref{corolProdMC}, and obtain the following concave overestimation:
\[
A_2\leq c_2\,\min\Big(-U_b\,g(X_4)+X_2\,V_a-U_b\,V_a,-U_a\,g(X_4)+X_2\,V_b-U_a\,V_b\Big)
\] 
It leads to:
\[
\begin{array}{ll}
	A_2\leq  c_2\,\min\Big(-M\,g(X_4)-X_2\,H+M\,H,-0\cdot g(X_4)+X_2\cdot 0-0\cdot 0\Big)&\\
	A_2\leq  c_2\,\min\Big(0,-M\,g(X_4)-H\,X_2+M\,H\Big)&\\\\
\end{array} 
\]
In this case, $V_a=-H<0$, so $V_a\,X_2$ is a linear term.\\
We are left with $-g$, which is convex on $[0;M]$, so its concave hull corresponds to the segment going through $(0,-g(0))$ and $(M,-g(M))$.\\
Hence $-g(X_4)\leq \alpha\cdot X_4+ \beta$, and basic algebra yields
$-g(0)=0=\beta$ and $\alpha=\dfrac{-g(M)+g(0)}{M}=-\dfrac{H}{M}$.\\
We conclude for the $A_2$ term:
\[
\begin{array}{ll}
	A_2\leq  c_2\,\min\Big(0,-H\,X_4-H\,X_2+M\,H\Big)&\\
	A_2\leq  c_2\,H\,\min\Big(0,M-X_2-X_4\Big)&\\\\
\end{array} 
\]
By the same token, we obtain symmetrically for the term $A_1\equiv -c_1\,X_1\,g(X_1)$:
\[
\begin{array}{ll}\label{eqRelaxConvA1}
	A_1\leq  c_1\,H\,\min\Big(0,M-2X_1\Big)&
\end{array} 
\]
Therefore, we derive the following concave overestimation of $f$:
\begin{equation}
	f(x)\leq\displaystyle\sum_{i=1}^{3}p_i\,X_i - c_3\,H\,X_3 + c_1\,H\,\min\Big(0,M-2X_1\Big)+c_2\,H\,\min\Big(0,M-X_2-X_4\Big)
\end{equation}

We assume, as in Section \ref{sectionUB_mono}, under the assumption of Lemma \ref{lemXG} that $x\mapsto x\,g(x)$ is convex. Again, there is no loss of generality, as the $A_1$ term can be convexified as shown above. This leads to the simpler concave overestimation  of $f$:
\begin{equation}\label{eqRelaxConvA2}
	f(x)\leq\displaystyle\sum_{i=1}^{3}p_i\,X_i -c_1\,X_1\,g(X_1) - c_3\,H\,X_3 + c_2\,H\,\min\Big(0,M-X_2-X_4\Big)
\end{equation}
We introduce the following problem:
\begin{equation}
	\big(PC_{3,M}\big)\left\{
	\begin{array}{rcl}
		\displaystyle\max &&X_6 \\
		X_6 &=&\displaystyle\sum_{i=1}^{3}p_i\,X_i -c_1\,X_1\,g(X_1)- c_3\,H\,X_3 +c_2\,H\,\min\Big(0,M-X_2-X_4\Big)\\
		X_4&=&X_1+X_2\\
		X_5&=&X_1+X_2+X_3 = M\\
		\forall i,&\,0\leq &X_i\leq M\\
		\forall i,&\,& X_i\in \NN 	
	\end{array}
	\right.
\end{equation}
The continuous relaxation of $\big(P_{3,M}\big)$ is convex, since $X_6$ is concave. Hence, $\big(P_{3,M}\big)$  is a non-linear convex problem which provides an upper bound for $\big(PNS_{3,M}\big)$. It can be solved directly using a convex solver.\\
However, in practice, this relaxation cannot be expected to be tight, because the affine approximation of $g(X_4)$ on the whole interval $[0;M]$ , with $M$ arbitrary large, is quite poor. Hence, one must restrict the bounding of $g(X_4)$ on a shorter interval to obtain a tighter approximation.
Therefore, in the general case, we need independent bound constraints for each decision variable. This flexibility is required to use \PRC\pht efficiently as the evaluation function of a \BnB\pht algorithm. \vspace{0.2cm}\\
\textbf{General case:} we consider \PNS\pht with independent bound constraints for each decision variable:\\ 
\begin{equation*}
	\left\{
	\begin{array}{rcl}
		\displaystyle\max \;X_{2N}=& \Big(\displaystyle\sum_{i=1}^N p_i\, X_i\Big) -\Big(c_1\,g(X_1)\,X_1 +c_N\,g(M)\cdot X_N\Big) &+\displaystyle\sum_{i=2}^{N-1}(-c_i)\cdot\big[X_i\,g(X_{N+i-1})\big]\\
		X_i=&x_i &,\;1\leq i\leq N\\
		X_{N+i-1}=&\displaystyle\sum_{j=1}^i X_j,\;&2\leq i\leq N-1\\
		X_{2N-1}=&\big(\displaystyle\sum_{j=1}^N X_j\big)=&M\\
		&0\leq L_i\leq X_i\leq U_i\leq M &,1\leq i\leq N\\
		&0\leq a_i\leq X_{N+i-1}\leq b_i\leq M &,2\leq i\leq N-1\\
		&\forall i,\, X_i\in \NN &	
	\end{array}
	\right.
\end{equation*}
With respect to \PNS\pht standard, we added the following bound constraints:
\[
\left\{\begin{array}{ll}
	0\leq L_i\leq X_i\leq U_i\leq M&\\
	0\leq a_i\leq X_{N+i-1}\leq b_i\leq M,\;with\; a_i<b_i&\\
\end{array}
\right. 
\]
We introduce the notation $A_i$ to refer to  the non-concave products of the objective function:
\[
A_i=-c_i\,X_i\,g(X_{N+i-1}),\;\; for \;\; 2\leq i\leq N-1
\]
\begin{rmk} Case $a_i=b_i$ implies $X_{N+i-1}$ is fixed and the corresponding product term $A_i$ becomes linear. We hence assume $a_i<b_i$, for all $i$ in the remainder of the section.
\end{rmk}
We map the bounds of the corollary \ref{corolProdMC} : 
\[
\left\{\begin{array}{ll}
	0\leq L_i\leq X_i\leq U_i\leq M&\vspace{0.1cm}\\
	0\leq a_i\leq X_{N+i-1}\leq b_i\leq M,\;with\; a_i<b_i&\vspace{0.1cm}\\
	-H\leq \underbrace{-g(b_i)}_{V_{a_i}}\leq -g(X_{N+i-1})\leq \underbrace{-g(a_i)}_{V_{b_i}}\leq 0&\vspace{0.1cm}\\
	V_{a_i}=-g(b_i)&\vspace{0.1cm}\\
	V_{b_i}=-g(a_i)&
\end{array}
\right. 
\]
We are in the case where  $V_{a_i}, V_{b_i}$ are negative and apply to the $X_i$ term. As in the case $N=3$, we bound $-g(X_{N+i-1})$ by its concave hull which corresponds to the segment going through $(a_i,-g(a_i))$ and $((b_i,-g(b_i))$.\\
Applying the previous results, we obtain: 
\[
\begin{array}{lll}
	A_i&\leq c_i\,\min\Big(-U_i\,g(X_{N+i-1})+X_i\,V_{a_i}-U_i\,V_{a_i},-L_i\,g(X_{N+i-1})+X_i\,V_{b_i}-L_i\,V_{b_i}\Big)&\\
	&\leq c_i\,\min\Big(U_i\,\big[\alpha_i\,X_{N+i-1}+\beta_i\big]+X_i\,V_{a_i}-U_i\,V_{a_i},L_i\,\big[\alpha_i\,X_{N+i-1}+\beta_i\big]+X_i\,V_{b_i}-L_i\,V_{b_i}\Big)&\\
	&\leq c_i\,\min\Big(U_i\,\alpha_i\,X_{N+i-1}+X_i\,V_{a_i}+U_i\,(\beta_i-V_{a_i}),L_i\,\alpha_i\,X_{N+i-1}+X_i\,V_{b_i}+L_i\,(\beta_i-V_{b_i})\Big)&\\
\end{array} 
\]
where $\alpha_i$ and $\beta_i$ are defined by:
\[
\left\{
\begin{array}{ll}
	\alpha_i=\dfrac{g(a_i)-g(b_i)}{b_i-a_i} <0&\\\\
	\beta_i= \dfrac{a_i\cdot g(b_i)-b_i\cdot g(a_i)}{b_i-a_i}&
\end{array}
\right.
\]

Thus, this leads to the following concave overestimation of the factored problem:
\[
\begin{array}{lll}
	f(x)\leq&\Bigg[\Big(\displaystyle\sum_{i=1}^N p_i\, X_i\Big)&-\Big(c_1\,g(X_1)\,X_1 +c_N\,g(M)\cdot X_N\Big)\\
	&+\displaystyle\sum_{i=2}^{N-1}c_i\cdot\min\Big(&U_i\,\alpha_i\,X_{N+i-1}-g(b_i)\,X_i-U_i\,b_i\,\alpha_i\;,\\
	&&\; L_i\,\alpha_i\,X_{N+i-1}-g(a_i)\,X_i-L_i\,a_i\,\alpha_i\Big)\Bigg]\\\\
\end{array} 
\]  
We conclude with the following problem denoted \PRC:
\begin{equation*}
	(PC_{N,M})\left\{
	\begin{array}{rll}\label{pbConvGnal}
		\displaystyle\max \;X_{2N}=&\Bigg[\Big(\displaystyle\sum_{i=1}^N p_i\, X_i\Big)-\Big(c_1\,g(X_1)\,X_1 +c_N\,g(M)\cdot X_N\Big)&\\
		+\displaystyle\sum_{i=2}^{N-1}c_i\cdot\min\Bigg(&U_i\,\dfrac{g(a_i)-g(b_i)}{b_i-a_i}\,X_{N+i-1}-g(b_i)\,X_i+U_i\,b_i\dfrac{g(b_i)-g(a_i)}{b_i-a_i},&\\
		&L_i\,\dfrac{g(a_i)-g(b_i)}{b_i-a_i}\,X_{N+i-1}-g(a_i)\,X_i+L_i\,a_i\dfrac{g(b_i)-g(a_i)}{b_i-a_i}\Bigg)\Bigg]&\\
		X_i=&x_i ,\;1\leq i\leq N&\\
		X_{N+i-1}=&\displaystyle\sum_{j=1}^i X_j,\;2\leq i\leq N-1 &\\
		X_{2N-1}=&\big(\displaystyle\sum_{j=1}^N X_j\big)=M &\\
		0\leq& L_i\leq X_i\leq U_i\leq M ,1\leq i\leq N&\\
		0\leq& a_i\leq X_i\leq b_i\leq M ,N+1\leq i\leq 2N-2&\\
		\forall i,&\, X_i\in \NN&\\\\	
	\end{array}
	\right.
\end{equation*}
Its (continuously relaxed) objective function is concave, so \PRC\pht is a  convex problem, it is a convex relaxation of \PNS\pht with independent bound constraints. We note that any admissible solution to \PRC\pht is also admissible for \PNS\pht and hence provides a lower bound, based on the original problem's objective function.

%% file: numalgo2026_UB_bnb_v1.tex
The convex relaxation \PRC\pht  described above provides a tight upper bound for \PNS.
We now describe our adapted \BnB\pht algorithm that uses this relaxation to reach the global optimum. We summarize our method as follows: we use \PRC\pht as its evaluation function. Then, we explore the node with the highest mixed-integer convex relaxation and branch on the product term (\textit{i.e.} $-c_i\,X_i\,g(X_{N+i-1})$) that is furthest away from its concave hull. We also apply standard bound reduction methods. The algorithm stops when convergence, within $\epsilon$ precision is reached, or if an exit condition (time, iteration limit) is triggered. It returns either the global optimum or both lower and upper bounds. The algorithm characteristics and pseudo-code are detailed below: 
\begin{itemize}
\item \textbf{Exploration:} we select the node $\mathcal{N}$, which has the highest convex relaxation, among the list of problems $\mathcal{L}$.\\
This approach is driven by convergence. Indeed, $\mathcal{L}$ contains a partition of admissible solutions of \PNS\pht and we try to get closer to the optimum of \PNS, by working on the admissible subset with the highest upper bound of $f$.\\ If we reach, at a given iteration, an upper bound very close (for instance within a given $\epsilon$ distance) to the value of the corresponding solution of \PNS, then every problem in the partition $\mathcal{L}$ would be within $\epsilon$ precision and this admissible solution would hence correspond to a global maximum.We also update the lower bound, if the corresponding convex relaxation leads to better solutions upon evaluation with the original problem's objective function. We refer to $x_{\mathcal{N}}$ as a feasible solution associated with the subproblem at node $\mathcal{N}$ .
\item \textbf{Branching:} Let $\mathcal{N}$ be a feasible node, not necessarily sub-optimal, and let $(UB_\mathcal{N},\;x_{\mathcal{N}})$ be its convex relaxation. We apply the rule of \citet{mccor_1976} and of \citet{falk_1969}, to the current problem. As seen in Section \ref{sectionUB_convex}, non-convexity comes from product terms $U_i\,V_i=-c_i\,X_i\,g(X_{N+i-1})$. For each $i$, we compute the difference between the product and its concave hull, which stems from \PRC, evaluated at node $\mathcal{N}$. We select the index of largest difference. Formally, we seek $i$, such that $\displaystyle\max_{i} E_{U_i\cdot V_i}(x_{\mathcal{N}})-U_i(x_{\mathcal{N}})\,V_i(x_{\mathcal{N}})$, and we branch on auxiliary variable $X_{N+i-1}$.
\item \textbf{Attachment point:} In $\big(PC_{N,M}\big)$, the decision variable $X_{N+i-1}$, at node $\mathcal{N}$ is constrained by:
$a_{i,\mathcal{N}}\leq X_{N+i-1}\leq b_{i,\mathcal{N}}$. Solution $x_{\mathcal{N}}$ is admissible, so we choose $K=X_{N+i-1}(x_{\mathcal{N}})=x_{\mathcal{N},N+i-1}$.\\
Hence, we select the $(N+i-1)^{th}$ coordinate of the optimum as the attachment point.\\We then create two new sub-problems $\mathcal{N}_1$ and $\mathcal{N}_2$ in which constraint  $a_{i,\mathcal{N}}\leq X_{N+i-1}\leq b_{i,\mathcal{N}}$ is replaced by $a_{i,\mathcal{N}}\leq X_{N+i-1}\leq x_{\mathcal{N},N+i-1}$, and $x_{\mathcal{N},N+i-1}+1\leq X_{N+i-1}\leq b_{i,\mathcal{N}}$ respectively.
$\mathcal{N}$ is deleted from $\mathcal{L}$.
$\mathcal{N}_1$ and $\mathcal{N}_2$ are added to $\mathcal{L}$. 
This branching strategy aims at tightening the convex relaxation of the objective function at the optimum $X_{N+i-1}(x_{\mathcal{N}})$, by splitting the variable whose non-convex product is the furthest away from that relaxation. The reader can refer to \citet{falk_1969} for a more thorough justification.     
\item\textbf{Bound-reduction:} This relies on very simple properties of the auxiliary variables.\\Indeed, $\forall i,\, X_i\geq 0$. Hence $X_{N+i-1}=(X_1+\cdots+X_i)\leq K \Rightarrow X_1,X_2,\cdots X_i\leq K$. Conversely, $X_{N+i-1}\geq K \Rightarrow X_{N+i},\cdots X_{2N-2}\geq K$ 
\end{itemize}
\begin{algorithm}[H]
	\caption{Adapted \BnB\pht Algorithm}
	\begin{algorithmic}
		\STATE	\textbf{Initialization:} $\mathcal{L}\leftarrow P_0$; $LB\leftarrow LB_0$; $UB\leftarrow +\infty$; $x_{LB}=x_{LB_0}$
		\WHILE{$\mathcal{L}\neq \varnothing $ and other exit conditions (time etc.)=false)}
		\STATE	\textbf{Exploration:} Solve $PC({\mathcal{N}'})$, for $\mathcal{N}'\in\mathcal{L}$
		\STATE Let $UB(\mathcal{N}')$ be the relaxation value at node $\mathcal{N}'$, i.e the solution of $PC(\mathcal{N}')$  
		\STATE Select the node $\mathcal{N}$, such that $UB(\mathcal{N})=\displaystyle\max_{\mathcal{N}'\in\mathcal{L}} UB(\mathcal{N}') $			
		\STATE	\textbf{Lower bound:} 
		\STATE Let $ LB=\{\max f(x),$ x feasible incumbent of $P_{0}\}$
		\STATE Let $\widehat{x}_{\mathcal{N}'}$ be the optimum of $PC(\mathcal{N}')$ and $f(\widehat{x}_{\mathcal{N}'})$ its evaluation under $P_{0}$ objective function.  
		\STATE if $LB\leq\displaystyle\max_{\mathcal{N}'\in\mathcal{L}} f({\widehat{x}_{\mathcal{N}'}})$, then we update LB.		
		\IF{$(UB(\mathcal{N})-LB)\leq\epsilon$}		
		\STATE Algorithm terminates
		\ELSE
		\STATE \textbf{Branching:} Select $X_i$ such that the difference between the product term $-c_i X_i\,g(X_{N+i-1})$ and its concave hull is maximum.
		\STATE When branching on $X_i$ is not possible, take the second-largest difference, and so on...
		\STATE \textbf{Sub-problems:} 
		Let $P_1$ and $P_2$ be such that : $K= x_{LB}^{\mathcal{N}}(i)$
		\STATE $\Omega_{\mathcal{N}}$: admissible set of $\mathcal{N}$\;  
		\STATE $\Omega_{\mathcal{N}_{1}}=\Omega_{\mathcal{N}}\cap\{x/x_i\leq K\}$ and $\Omega_{\mathcal{N}_{2}}=\Omega_{\mathcal{N}}\cap\{x/x_i\geq K+1\}$
		\STATE \textbf{Bound reduction} on $P_1$ and $P_2$
		\STATE 	$\mathcal{L}\leftarrow\mathcal{L}\cup \mathcal{N}_1\cup \mathcal{N}_2$
		\ENDIF
		\ENDWHILE		
		\STATE \textbf{Output:}
		\IF{$(UB(\mathcal{N})-LB)\leq\epsilon$}
		\STATE $Opt P_0=\{LB,x_{LB}\}$
		\ELSE 
		\STATE Upper bound $UB(\mathcal{N})$
		\STATE $Opt P_0\in[LB;UB(\mathcal{N})]$
		\ENDIF
	\end{algorithmic}
\end{algorithm}

%% file: numalgo2026_EN_intro_v1.tex
In previous sections, we presented several methods to either optimally solve \PNS\pht or to get both lower and upper bounds. We now apply them to a financial application, the large block sale problem and benchmark them against widely used generic solvers. This section is organized as follows: In Section \ref{sectionNEFrame}, we first describe our experimental setup. In line with the LBS framework, we discuss instance sizes, data sources, penalty functions, and justify our choice of solver. Then, we choose to structure \EN\pht by instance size and solution methods (DP-based, local search, or convex relaxation). 
Section \ref{sectionNESmall} is devoted to small and medium-sized instances. We first study DP-based methods in Section \ref{sectionNE_Small_DP}, and then present convexification results in the remainder of Section \ref{sectionNESmall}. 
We apply a similar structure to large and very large instances, for which exact DP is no longer available, in Section \ref{sectionNELarge}. Memory limitations are addressed in Section \ref{sectionNE_Large_Mem}, followed by DP-based methods and convexification techniques in Section \ref{sectionNE_Large_DP}.

%% file: numalgo2026_EN_framework_v1.tex
We detail our framework and begin with machine characteristics and solvers:\\\EN\pht were run on a PC with an Intel Xeon(R) Silver 4114 at 2.20 GHz, two sockets, 20 cores, and 32 GB of RAM. The OS is Linux Ubuntu 18.04 (Bionic Beaver) and the C++ compiler is gcc v9.3.0.\vspace{0.1cm}\\
Next, we list the solvers used as benchmarks and cite corresponding reference articles:\\
\Bar\pht v2022.11.3 (\citet{baron_2022}); \Bon\pht v1.8.8 (\citet{bonmin_2020}); \Cou\pht v0.5.8 (\citet{belo_2009,belo_2013}); \LS\pht v9.5 Linux64 (\citet{beno_2011,beno_2014}) and \Sci\pht v7.0.1 (\citet{scip7_2020}).\vspace{0.2cm}\\
\textbf{Discussion about our choice of solvers:} \Sci\pht is a widely used solver for integer programs. It is flexible, open-source and serves as a useful reference point in the solver ecosystem. \Cou\pht was selected because it implements a solution method closely related to ours, namely a reformulation step and a linear (convex) relaxation embedded in a \BnB\pht algorithm. \Bar\pht is a well-established commercial solver for non-convex global optimization. Lastly, we included \Bon\pht, because it supports a wide range of penalty functions and produced excellent results. However, \Bon\pht is not a global solver for non-convex problems (it returns local maxima only) and does not provide upper bounds.\\Solvers in our pool also have technical limitations.
None of them, except \Bon\pht, supports an arctangent in the penalty function. In particular \Bar\pht does not currently accept trigonometric functions.\\
Furthermore, there are commercial limitations on the number of variables for \Bar, \Cou, and \Bon\pht, although these did not affect the conclusions of the \EN\pht presented in this section.\\
Lastly, in \EN , all solvers received the same \PNS\pht instance to solve, with identical integer settings, time limits, and, when applicable, the same initial solution.\vspace{0.2cm}\\
 
\textbf{Instance size and terminology:} \PNS\pht instances are controlled by the number of decision variables $N$ and the block size $M$. They are presented in the results table either directly as $(N,M)$, or in power form $(N, M)=(10^{a},10^{b}),\; a<b$, with $1\leq a\leq 3$ and $2\leq b\leq 9$. 
We refer to instances as very small to large using the following terminology:\\\textbf{very small}, when $N\leq 10$ (10 or fewer decision variables), \textbf{small}, when $(a,b)\in\big\{(1,2),(1,3),(1,4),(2,3),(2,4)\big\}$, \textbf{medium-sized}, when $(a,b)\in\big\{(1,5),(1,6),(2,5),(3,5)\big\}$, \textbf{large} for $\big\{(1,7),(1,8),(1,9),(2,6),(2,7),(2,8),(2,9),(3,6)\big\}$, and \textbf{very large} for $\big\{(3,7),(3,8),(3,9)\big\}$. We often refer to exponents $(a,b)$ in the \EN.\\
\textbf{Data sources:} to simulate realistic asset price trajectories, we used a Geometric Brownian motion (cf.\citet{hull_2002}) and integrated over a short $\Delta t$, yielding:  $p_{t+\Delta t}=p_t \exp\left[ \Big(\mu -\dfrac{\sigma^2}{2}\Big)\Delta t+\sigma\,\sqrt{\Delta t}\,Z\right] ,\;\hbox{where}\;Z\sim \mathcal{N}(0,1)		
\label{BMeq}$
We start at $p_0=100$, and set varying moments, $\mu\in\{-0.05;0;+0.05\}$ and $\sigma\in\{0.10;0.25;0.70\}$. We also include the constant trajectory $\forall t, p_t=p_0 $.  Thus, for each pair $(N,M)$, we generate 10 deterministic non-negative trajectories, differing in mean and volatility regimes.\\
\noindent\textbf{Results table presentation:} Consistent with the data sources, we divide instances between constant (\textbf{CST}) and varying stock prices (\textbf{AVG}). AVG refers to the average performance metric (i.e. quality or CPU time) over the non-constant trajectories. If the performance varies materially between trajectories, we mention this explicitly.\\
The \textbf{quality} of a lower (respectively, upper) bound is measured by the percentage gap between this bound and the optimum obtained by exact DP. When the optimum is unavailable, the quality is defined as the percentage gap with respect to the best known lower (respectively, upper) bound.\\
In the results tables, we explicitly indicate whether the optimum is attained using the label $Opt$.\\
\textbf{CPU time } is measured in seconds. If an algorithm fails to converge within the prescribed time limit (e.g. 3 hours) or terminates with an error, it is reported as \textit{>3h} (respectively, \textit{error}) in the results table.\\
\textbf{Coefficients $\mathbf{c_i}$:} Financial market economics require a strictly positive asset price, which can be interpreted as the firm’s intrinsic value. In our \EN, we assume $c_i=\beta p_i$, where $\beta=0.9$. 
This implies strictly positive prices and a floor price of 10\% of the current price.\\
As a result, the objective function $f$ is bounded by $f\in[(1-\beta)\,\underline{p}\,M\,;\,\overline{p}\,M]$, where $\underline{p}, (resp.\;\overline{p}$) is the $\displaystyle\min_i {p_i}$,$(resp.\;\displaystyle\max_i {p_i})$.\vspace{0.2cm}\\ 
\textbf{Penalty functions:}  The main techniques of this paper (dynamic programming and convexification) work for any sufficiently regular (e.g. $\mathcal{C}^2$) function. Additional assumptions are only needed for the "monotonic" upper bound in Section \ref{sectionUB_mono}.\\
However, from an economic perspective, the LBS problem requires positive asset prices and an increasing penalty function. We therefore selected, for the \EN, the following real, non-negative, increasing prototype functions, all suitable for our solution methods:
\vspace{0.2cm}\\
$\begin{aligned}[t]
	G\;\left\{
	\begin{array}{l}
		x \mapsto \displaystyle \frac{x}{1+x}\\
		or\; x\mapsto 1-\displaystyle \frac{2}{1+\sqrt{1+x}}\\
		or\; x\mapsto \displaystyle\frac{2}{\pi}\arctan(x)\\			
	\end{array}
	\right.
\end{aligned}$
\vspace{0.2cm}\\
These $G$ functions are  $\mathcal{C}^2$ and bounded on $\mathbb{R}^+$. They satisfy, $G(0)=0$ and $\displaystyle\lim_{+\infty}G=1$. They are concave and the function $ x\mapsto x\cdot G(x)$ is convex, thus fulfilling the assumptions of Section \ref{sectionUB_mono}. 
\\Finally, the penalty function $g$ is defined by $g:x\mapsto G(\eta\, x)$, where $\eta$ is constant and chosen $g(M)=0.99$. A more detailed discussion on calibration is provided in Appendix \ref{sectionCalibration}.
For illustrative purposes, we provide below a graph of one objective function in low dimension : \vspace{-0.5cm}\\ 
\begin{figure}[width=.9\linewidth,cols=1,pos=H]
	\centering	
	\caption{Objective $\big(PNS_{3,100}\big)\;,\; G:x \mapsto\frac{x}{1+x}$}
	\label{figPNS3_100_f2}
	\includegraphics[scale=0.65]{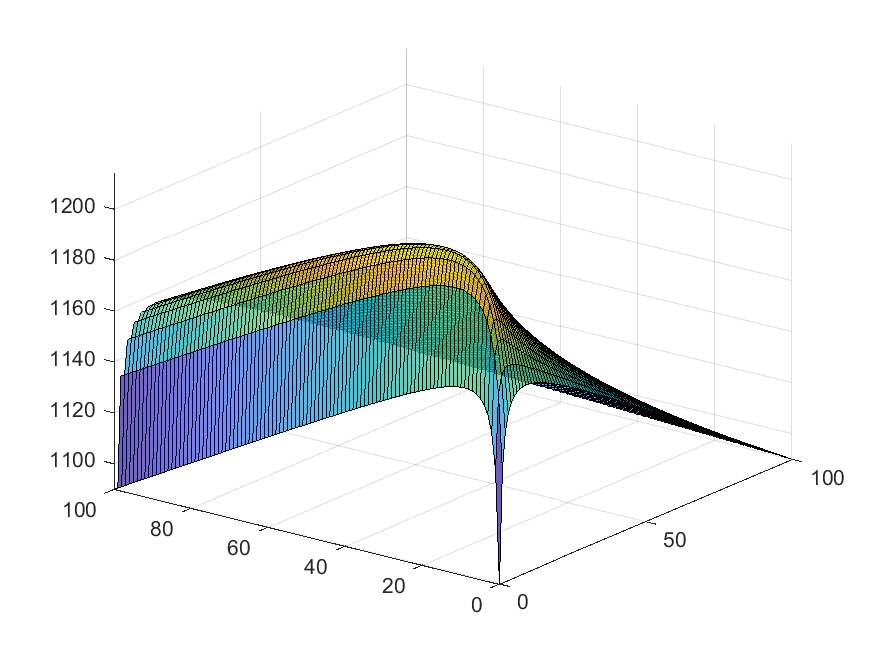}
\end{figure}

%% file: numalgo2026_EN_SmallMed_v2.tex
\vspace{0.2cm}\subsubsection{DP-based methods and other heuristics}\label{sectionNE_Small_DP}
We begin by solving optimally \PNS\pht through exact DP using the exact DP algorithm from Section \ref{sectionDPExact}. The results are similar across all penalty functions, in both the constant (CST) and average (AVG) cases. 
CPU time matches theoretical complexity in $O(N\,M^2)$, ranging from milliseconds up to about $1800\,s$ ($\sim 30\,mn$) for $(a,b) = (2,5)$. 
Hence, while perfectly suited for small instances, exact DP does not scale up. It becomes inefficient when $(a,b)\geq (2,5)$, as solving takes from a few hours ($\sim 5\,h$, when $(a,b) = (1,6)$ or $(3,5)$), up to a few days (e.g $\sim 50\,h$, for $(a,b)=(2,6)$). \vspace{0.2cm}\\ 
Then, we study the TSDP method from Section \ref{sectionLB_TS}, with a first-stage coarse-grained DP with grain $P$, followed by a second-stage search in a funnel of size $\lambda. P$ as the second-stage search neighborhood. We begin by discussing its time complexity.\\ 
\textbf{TSDP Complexity:} $\lambda$ is constant ($\lambda=5$ for \EN) and we note $N=10^a, M=10^b, P=10^c$.\vspace{0.1cm}\\
Coarse-grained DP is $O(10^{a+2b-2c})$ and DP with bounds is $O(10^{2(a+c+1)})$.\vspace{0.1cm}Hence, the total time complexity is $O(10^{\max(a+2b-2c,2a+2c+2)})$.\vspace{0.1cm}\\Its theoretical minimum is reached when $c=\frac{1}{4}(2b-a-2)$, leading to minimal time complexity in $O(10^{\frac{3a}{2}+b+1})$, or $O(N^{\frac{3}{2}}\cdot M)$.
Compared to exact DP (i.e $O(10^{a+2b})$),  this represents a gain of $10^{b-\frac{a}{2}-1}$ , or $\frac{M}{10\sqrt{N}}$.\vspace{0.1cm}\\Furthermore, this gain increases as $M$ grows relative to $N$. Assuming $a$, $b$ and $P$ are integers, we can approximate the theoretical minimum by choosing  $c$ rational, and letting $P=\lfloor 10^c\rfloor$.\\
To select TSDP parameters, we vary $P$ from $10$ to $M/10$, and set $\lambda\in\{1,5\}$.
Empirically, for medium-sized instances, the highest coverage ratio, defined as the number of instances solved to optimality over the total number of instances, was achieved with $P=100$ and a funnel size $\lambda=5$. Using $(\lambda P=500)$, TSDP attains the optimum for more than 95\% of the 117 instances.\vspace{0.2cm}\\ 
With this setting, we compare TSDP performances with that of basic heuristics (\textbf{FS} and \textbf{LIS} from Section \ref{sectionFSLIS}), ILS from section \ref{sectionILS}, the commercial solver \LS\pht (abbreviated \textbf{LS}), and the monotonic upper bound \textbf{\ub2} from Section \ref{sectionUB_mono}. Since results are similar across all penalty functions, we elected to display only the one associated with the prototype $G=\frac{2}{\pi}arctan(x)$, for both CST and AVG samples:\vspace{0.2cm}\\
As seen in tables \ref{tabSmall_TSDP_CST} and \ref{tabSmall_TSDP_AVG}, TSDP reaches the optimum for almost every instance, consistent with prior tables. FS is about $20\%-25\%$ worse than the optimum, and LIS improves as $N$ increases. When $N$ and $M$ are close, it means there are enough decision variables relative to the block size $M$, making a linear sale strategy nearly optimal. This explains why LIS performs better in such cases.\\\LS\pht provides a very good lower bound for every instance, but seldom reaches the optimum. Its coverage ratio is about 10\% for CST and AVG samples.\\
The upper bound \ub2 is not tight, even for small instances. It remains stable in $M$, but deteriorates significantly when $N$ increases, tending towards the natural upper bound $M\,\overline{p}$. For instance in the $(3,5)-\,CST$ instance, the relative difference between $\overline{p}$ and the optimum is about 584\% (almost sevenfold).\\
The ILS local search algorithm performs better on CST than for AVG samples, with the gap widening as instance size increases. In the AVG sample, ILS only marginally outperforms LIS, with results varying significantly by price trajectory $p$. We observed that ILS performance decreases with higher $\sigma$. In high-volatility markets, where most of the sales occur around price spikes, ILS is less efficient and quickly becomes trapped in a local maximum, away from the global optimum. By contrast, the smoother the price trajectory, the better ILS performs. An example for the instance $(a,b)=(3,5)$ is reported in Table \ref{tabSmall_TSDP_traj35} in Appendix \ref{sectionExtraTables}.
Regarding CPU time, results are similar across the penalty functions. TSDP generally converges within a few minutes and solves the instance $(a,b)=(2,6)$ in less than one minute, while the exact DP requires about 50 hours for that instance. \\In addition, TSDP takes slightly longer for CST than AVG. 
The underlying rationale is that, under volatile price trajectories, optimal strategies concentrate trading activity primarily to price spikes, so that many decision variables take small or zero values.
In the LBS context, most trading occurs during price spikes, with minimal trading during dips, which is consistent with market intuition. As a result, the actual number of steps required in the Bellman equation is lower in practice.\\Lastly, ILS converges in under a minute for all instances, while the other heuristics are instantaneous due to closed-form solutions. The corresponding Table \ref{tabSmall_TSDP_CPU} can be found in Appendix \ref{sectionExtraTables}.
\begin{table}[width=.9\linewidth,cols=1,pos=H]
	\centering
	\caption{Quality - small and medium-sized instances - CST}
	\label{tabSmall_TSDP_CST}
	\input{recap_cst_tab4.tex}
\end{table}\vspace{-.5cm}
\begin{table}[width=.9\linewidth,cols=1,pos=H]
	\centering
	\caption{Quality - small and medium-sized instances - AVG}
	\label{tabSmall_TSDP_AVG}	
	\input{recap_avg_tab4.tex}
\end{table}

\subsubsection{Convexification for very small instances}\label{sectionNE_VerySmall_Conv}
We first present results for very small instances ($3\leq N<10$, $10^2\leq M\leq 10^4$, since both the \BnB\pht algorithm and solvers already exhibit specific behaviors, which remain consistent for larger instances. Since only \Bon\pht can handle the arctangent, we work with the other two penalty functions. The time limit is set to 3h ($10\,800s$) for all solvers. As expected, for these instances, both our \BnB\pht algorithm and all solvers, except for \Sci, either reach the optimum or return a very close lower bound. \vspace{0.2cm}\\Upper bounds quality however, proves surprisingly heterogeneous, especially for commercial solvers.\\ While the \BnB\pht algorithm reaches the optimal solution only up to $N\leq 8,M=10^2$, it provides tight upper bounds (within $5\%$ of the optimum for $M=10^2$) and reasonably good results when $M=10^3,\,10^4$ (within $15\%$).\\Both \Bar\phantom{} and \Sci\phantom{} return very poor upper bounds. Their results quickly tend toward the natural upper bound $M\,\overline{p}$ (which stands at a relative distance of about 650-700\%). Thus, they do not provide useful upper bounds within 3 hours of CPU time.\\Interestingly, while \Sci\pht does not perform well as a standalone solver, it performs much better when used within our \BnB\pht algorithm to solve convex relaxations over multiple sub-domains.  In fact, our algorithm outperforms both \Bar\pht and \Cou\pht for the penalty function $x\mapsto\frac{x}{1+x}$. For the other function $x\mapsto 1-\frac{2}{1+\sqrt{1+x}}$, \Cou\pht performs as well as or better than our \BnB. 
CPU time analysis reveals that most instances do not converge within the 3h limit. However, one notable exception is \Bon, which is extremely fast. It converges in just a few seconds for every very small instance. Summary Table \ref{tabVerySmall_UB} is provided in Appendix \ref{sectionExtraTables},\vspace{0.2cm}\\
We can now turn to small and medium-sized instances and present aggregated convexification results.
\subsubsection{Convexification aggregated results} \label{sectionNE_Small_Conv}
To improve clarity, we restrict attention to the two best-performing solvers: \Bon, which achieves the best overall results, and \Cou, which serves as our reference solver. In Table \ref{tabSmall_UB_convex}, which is dedicated to upper bounds, we additionally report the monotonic bound \ub2 introduced in Section \ref{sectionUB_mono} for comparison.\vspace{0.1cm}\\
In the following tables, as \Bon\pht cannot guarantee global optimality, we explicitly indicate when the optimum from exact DP, is attained by the label $Opt$. Results are reported for the CST (constant trajectory) case under both penalty functions. Table \ref{tabSmall_LB_Convex} shows that, as in very small instances, both \Bon\pht and \Cou\pht yield very good admissible solutions.\\
\Bon's coverage ratio is about 61\% (i.e, the optimum is reached 11 times out of 18 instances). It is lower than TSDP, which reaches the optimum for nearly every small and medium-sized instance.\\  
In Table \ref{tabSmall_LB_Convex}, lower bounds returned by our \BnB\pht are less tight than those in the very small case, but still acceptable. For the function $x\mapsto\frac{x}{1+x}$ the gap remains within $12\%$ of the optimum. For the function $x\mapsto 1-\frac{2}{1+\sqrt{1+x}}$ the gap is below 2\%, indicating very tight results.\vspace{0.1cm}\\
However, the picture is less favorable when considering the upper bounds reported in Table \ref{tabSmall_UB_convex}. \\When $N=10$ and the penalty function is $x\mapsto\frac{x}{1+x}$, our \BnB\ yields very tight upper bounds (within a 15\% gap), whereas the bounds obtained by \Cou\pht are several times larger (exceeding a 100\% gap). Conversely, for the other penalty function, the opposite behavior is observed: \Cou\pht outperforms \BnB. The superior performance of \BnB\pht in the former case may be attributed to the highly non-convex shape of $f$ when $G:x\mapsto\frac{x}{1+x}$, as illustrated in Figure \ref{figPNS3_100_f2}.\vspace{0.1cm}\\
When $N\geq 100$, both our algorithm and the reference solver yield uninformative upper bounds, which are close to the natural bound. For instance, the instance $(3,5)$ returns an error when using \Cou.
\begin{table}[width=.9\linewidth,cols=1,pos=H]
	\centering
	\caption{Lower bounds (Convexification) - small and medium-sized instances}
	\label{tabSmall_LB_Convex}
	\input{convbnb_smallmed_lb_f2_tab_4.tex}
\end{table}
\begin{table}[width=.9\linewidth,cols=1,pos=H]
	\centering
	\caption{Upper bounds (Convexification) - small and medium-sized instances}
	\label{tabSmall_UB_convex}
	\input{convbnb_smallmed_ub_f26_tab_1.tex}
\end{table}
Computation time analysis reveals an even more interesting pattern. As indicated in Tables \ref{tabSmall_UB_Convex_CPU_f2} and \ref{tabSmall_UB_Convex_CPU_f6}, no instance, whether solved by our \BnB\pht or by \Cou\pht, converges within 3h, which is consistent with previous observations.\\\Bon's empirical complexity appears polynomial in $N$, but decreasing in $M$, which is counter-intuitive. However, \Bon\pht does not converge in 3h when $N=10^3$.
The comparison between \Bon\pht and exact DP, (with complexity in $O(NM^2)$) proves also very insightful. Both methods solve instantaneously and exactly the instance $(1,2)$.
As $N$ increases (with fixed $M$), DP becomes faster than \Bon, which seems to be at least in $N^2$.\\On the other hand, as $M$ increases (with fixed $N$), then \Bon\pht converges much faster than DP, but not necessarily towards the optimum. The most significant differences are highlighted in bold in the CPU tables. We emphasize two specific instances that illustrate this pattern:\\
$\bullet$ Instance (1,6): \Bon\pht converges instantaneously,while DP takes about 6h. Although \Bon\pht does not reach the optimum, it yields an excellent lower bound, with a difference below $0.01\%$.\\
$\bullet$ Instance (2,3): solved optimally and instantly by DP, but takes around 1 hour and 15 minutes with \Bon.
\\These differences highlight the heterogeneous performance of both our \BnB\pht algorithm and the commercial solvers. They underscore the practical importance of estimating complexity (even empirically) before selecting a solution method.
\begin{table}[width=.9\linewidth,cols=1,pos=H]
	\centering
	\caption{CPU time (Convex.) - Small and medium-sized instances - $x\mapsto\frac{x}{1+x}$}
	\label{tabSmall_UB_Convex_CPU_f2}
	\input{convbnb_smallmed_cpu_f2_tab_2.tex}	
\end{table}
\vspace{-0.3cm}
\begin{table}[width=.9\linewidth,cols=1,pos=H]
	\centering
	\caption{CPU time (Convex.) - Small and medium-sized instances - $x\mapsto 1-\frac{2}{1+\sqrt{1+x}}$}
	\label{tabSmall_UB_Convex_CPU_f6}
	\input{convbnb_smallmed_cpu_f6_tab_2.tex}
\end{table}
In the next section, we present the results of the \EN\pht for large and very large instances.

%% file: recap_cst_tab4.tex
\begin{tabular}{|c|c|c|c|c|c|c|c|}
\hline
\multicolumn{2}{|c}{LB Quality}&\multicolumn{5}{c|}{CST, $\frac{2}{\pi}arctan(x)$}&{UB Quality}\\
\hline
N&M&FS&LIS&ILS&LS&TSDP&\ub2 \\\hline
$10^1$&$10^2$&20.42&7.81&0.41&\textit{Opt}&\textit{Opt}&38.19 \\\hline
$10^1$&$10^3$&20.52&7.92&0.02&<0.01&\textit{Opt}&38.02     \\\hline
$10^1$&$10^4$&20.52&7.92&<0.01&<0.01&\textit{Opt}&38.02            \\\hline
$10^1$&$10^5$&20.52&7.92&<0.01&<0.01&\textit{Opt}&38.02            \\\hline
$10^1$&$10^6$&20.52&7.92&<0.01&<0.01&\textit{Opt}&38.02            \\\hline
$10^2$&$10^3$&25&2.12&1.04&<0.01&\textit{Opt}&364.61       \\\hline
$10^2$&$10^4$&25&2.14&0.13&0.01&\textit{Opt}&364.56              \\\hline
$10^2$&$10^5$&25.01&2.14&<0.01&0.01&\textit{Opt}&364.56    \\\hline
$10^3$&$10^5$&25.49&0.23&0.12&0.10&\textit{Opt}&558.72            \\\hline
\end{tabular}

%% file: recap_avg_tab4.tex
\begin{tabular}{|c|c|c|c|c|c|c|c|}
\hline
\multicolumn{2}{|c}{LB Quality}&\multicolumn{5}{c|}{AVG, $\frac{2}{\pi}arctan(x)$}&{UB Quality}\\
\hline
N&M&FS&LIS&ILS&LS&TSDP&\ub2 \\\hline
$10^1$&$10^2$&21.66&10.47&0.96&\textit{Opt}&\textit{Opt}&139.4              \\\hline
$10^1$&$10^3$&21.77&10.58&0.72&<0.01&\textit{Opt}&139.07   \\\hline
$10^1$&$10^4$&21.77&10.59&0.71&<0.01&\textit{Opt}&139.07   \\\hline
$10^1$&$10^5$&21.77&10.59&0.71&<0.01&\textit{Opt}&139.07   \\\hline
$10^1$&$10^6$&21.77&10.59&0.71&<0.01&<0.01&139.07   \\\hline
$10^2$&$10^3$&28.48&7.77&5.26&<0.01&\textit{Opt}&419.26    \\\hline
$10^2$&$10^4$&28.49&7.8&4.04&0.01&\textit{Opt}&419.13            \\\hline
$10^2$&$10^5$&28.50&7.8&3.97&0.01&<0.01&419.13             \\\hline
$10^3$&$10^5$&30.49&5.98&7.39&0.14&<0.01&576.05           \\\hline
\end{tabular}

%% file: convbnb_smallmed_lb_f2_tab_4.tex
\begin{tabular}{|c|c|c|c|c||c|c|c|}
	\hline
	\multicolumn{2}{|c|}{LB Quality}&\multicolumn{3}{c||}{CST, $\;\;G:x\mapsto\frac{x}{1+x}$}&\multicolumn{3}{c|}{CST, $\;\;G:x\mapsto 1-\frac{2}{1+\sqrt{1+x}}$}\\
	\hline
	N&M&\textit{B}\&\textit{B}&\Bon&\Cou&\textit{B}\&\textit{B}&\Bon&\Cou     \\\hline
	$10^1$&$10^2$&0.03&$Opt$&0.03&0.04&$Opt$&<0.01      				\\\hline
	$10^1$&$10^3$&0.19&$Opt$&<0.01&0.19&$Opt$&<0.01             \\\hline
	$10^1$&$10^4$&0.28&$Opt$&<0.01&0.22&$Opt$&<0.01             \\\hline
	$10^1$&$10^5$&0.58&$Opt$&<0.01&0.3&$Opt$&<0.01              \\\hline
	$10^1$&$10^6$&0.59&<0.01&<0.01&0.36&<0.01&<0.01             \\\hline
	$10^2$&$10^3$&7.21&$Opt$&0.06&1.54&$Opt$&0.03             				\\\hline
	$10^2$&$10^4$&6.62& <0.01&<0.01&1.6&<0.01&<0.01              \\\hline
	$10^2$&$10^5$&6.52&$Opt$&<0.01&1.13&<0.01&<0.01             \\\hline
	$10^3$&$10^5$&11.84&<0.01&<0.01&1.3&<0.01&<0.01             \\\hline
\end{tabular}

%% file: convbnb_smallmed_ub_f26_tab_1.tex
\begin{tabular}{|c|c|c|c|c||c|c|c|}
	\hline
	\multicolumn{2}{|c|}{UB Quality}&\multicolumn{3}{c||}{CST, $\;\;G:x\mapsto\frac{x}{1+x}$}&\multicolumn{3}{c|}{CST, $\;\;G:x\mapsto 1-\frac{2}{1+\sqrt{1+x}}$}\\
	\hline
	N&M&\textit{B}\&\textit{B}&\Cou&\ub2&\textit{B}\&\textit{B}&\Cou&\ub2     \\\hline
	$10^1$&$10^2$&1.23&168.65&36.89&13.88&0.03&11.13      				\\\hline
	$10^1$&$10^3$&10.55&207.56&36.71&139.99&24.04&11.09             \\\hline
	$10^1$&$10^4$&13.58&211.86&36.71&178.72&48.3&11.09             \\\hline
	$10^1$&$10^5$&15.1&219.01&36.71&195.14&110.25&11.09              \\\hline
	$10^1$&$10^6$&14.78&215.39&36.71&254.51&110.86&11.09             \\\hline
	$10^2$&$10^3$&607.33&609.83&292.02&716.12&753.22&58.72             				\\\hline
	$10^2$&$10^4$&609.52&609.64&291.91&743.93&746.61&58.58              \\\hline
	$10^2$&$10^5$&609.56&607.22&291.91&722.08&741.91&58.57             \\\hline
	$10^3$&$10^5$&605.34&error&548.19&750.64&error&192.78             \\\hline
\end{tabular}

%% file: convbnb_smallmed_cpu_f2_tab_2.tex
\begin{tabular}{|c|c|c|c|c|c|}
	\hline
	\multicolumn{2}{|c|}{\textbf{CPU}}&\multicolumn{4}{c|}{CST, $\;\;G:x\mapsto\frac{x}{1+x}$}\\
	\hline  	
	N&M&\BnB&\Bon&\Cou&Exact DP	   		\\\hline  	
	$10^1$&$10^2$&$>3h$&0.37&$>3h$&<0.01 	\\\hline
	$10^1$&$10^3$&$>3h$&0.38&$>3h$&0.03			\\\hline
	$10^1$&$10^4$&$>3h$&0.74&$>3h$&1.77			\\\hline
	$10^1$&$10^5$&$>3h$&0.58&$>3h$&181			\\\hline
	$10^1$&$10^6$&$>3h$&0.56&$>3h$&18 141		\\\hline
	$10^2$&$10^3$&$>3h$&\textbf{\textit{Opt} 500}&$>3h$&\textbf{0.19}			\\\hline
	$10^2$&$10^4$&$>3h$&423&$>3h$&18.83			\\\hline
	$10^2$&$10^5$&$>3h$&\textbf{\textit{Opt} 324} &$>3h$&\textbf{1829}			\\\hline
	$10^3$&$10^5$&$>3h$&$>3h$&$>3h$&17 550		\\\hline
\end{tabular}

%% file: convbnb_smallmed_cpu_f6_tab_2.tex
\begin{tabular}{|c|c|c|c|c|c|}
	\hline
	\multicolumn{2}{|c|}{\textbf{CPU}}&\multicolumn{4}{c|}{CST, $\;\;G:x\mapsto 1-\frac{2}{1+\sqrt{1+x}}$}\\
	\hline
	N&M&\BnB&\Bon&\Cou&Exact DP	   		\\\hline  	
	$10^1$&$10^2$&6740&0.39&$>3h$& <0.01			\\\hline
	$10^1$&$10^3$&$>3h$&0.54&$>3h$& 0.02				\\\hline
	$10^1$&$10^4$&$>3h$&0.53&$>3h$&1.77         		\\\hline
	$10^1$&$10^5$&$>3h$&0.85&$>3h$&176         			\\\hline
	$10^1$&$10^6$&$>3h$&\textbf{0.26}&$>3h$&\textbf{\small 17 876}         		\\\hline
	$10^2$&$10^3$&$>3h$&\textbf{\textit{Opt} 4516}&$>3h$ &\textbf{0.18}         		\\\hline
	$10^2$&$10^4$&$>3h$&3146&$>3h$&17.55          		\\\hline
	$10^2$&$10^5$&$>3h$&2649&$>3h$&1752          		\\\hline
	$10^3$&$10^5$&$>3h$&$>3h$&$>3h$&17 949       		\\\hline
\end{tabular}

%% file: numalgo2026_EN_Large_v1.tex
At this scale, exact DP is no longer applicable, as it is too costly in both time and memory. Furthermore, several methods discussed in previous sections do not converge for the same reasons. We first address memory limits in relation to instance size.

\subsubsection{Memory limits}\label{sectionNE_Large_Mem}
We determine the maximum tractable size for DP-based methods, whose space complexity is $O(N\,M)$. A double occupies 8 bytes of heap memory and 1GB=$1024^3=2^{30}$ bytes. Given our $32\,GB$ RAM machine, we get:\\$8\,N\,M=\;8\cdot 10^{a+b}\leq 32\cdot2^{30} \implies \;a+b\leq 32\cdot\dfrac{\ln(2)}{\ln(10)}\approx 9.63<10$. 
Thus, our memory constraint corresponds to a maximum total exponent of 9. We cannot go beyond $(a,b)=(1,8),\;(2,7),\;(3,6)$ for exact DP or TSDP.\\
In the exact DP algorithm, the $N\times M$ matrix of optimal values is fully allocated. However, in DP with bounds (i.e. the  TSDP second stage) the space complexity is $O(N^2\,P)$. Consequently, the standard $N\times M$ matrix becomes memory-intensive when $N\,P\gg M$.
\\Since the TSDP optimum matrix is only filled within the funnel bounds, one can avoid storing zero entries by concatenating the data into a linear vector, as illustrated in Figure \ref{fig:StructDP}. The space saved can be shown to be approximately $\frac{M}{2\lambda\,P\,N}$.\\
In \EN\pht on (very) large instances, we typically have $M\gg N$, by at least a factor of $10^3$, and up to  $10^6$. Therefore, this adapted data structure would likely reduce memory usage by at least one order of magnitude.
We discuss this further as a research perspective in Section \ref{sectionCCL}.
\begin{figure}[width=.9\linewidth,cols=1,pos=H]
	\caption{Comparison of DP matrix structures}
	\label{fig:StructDP}
	\begin{center}
		\begin{tikzpicture}[scale=.9]	
			\tikzstyle{etape}=[text width=2cm,align=left]
			\tikzstyle{etape2}=[text width=5cm, align=left]
			
			\node[etape] at(-7,5){$n\backslash m$};
			\node[etape] at(-7,4.25){$1$};
			\node[etape] at(-7,3.5){$2$};
			\node[etape] at(-7,2.75){$\vdots$};
			\node[etape] at(-7,2){$N$};
			\node[etape] at(-5.75,5){$1$};
			\node[etape] at(-5.0,5){$L_1$};
			\node[etape] at(-3.75,5){$L_2$};
			\node[etape] at(-2.25,5){$U_1$};
			\node[etape] at(-1.2,5){$U_2\;\cdots U_N$};
			\draw (-7,5.25) -- (-7,1.5);
			\draw (-8,4.7) -- (0,4.7);
			\draw (-7,4.7) rectangle (0,1.5);
			
			\node[etape2] at(-5.5,5.75){\textbf{Current structure :}};
			\draw[fill=green!45] (-6.0,4.5) rectangle (-3.0,4.0);
			\node[etape] at(-3.8,4.2){$O_{1,m}$};
			\draw[fill=red!45] (-4.75,3.75) rectangle (-2.0,3.25);
			\node[etape] at(-2.8,3.45){$O_{2,m}$};
			\node[etape] at(-1.25,2.75){$\vdots$};
			\draw[fill=blue!45] (-2.75,2.25) rectangle (-1.0,1.75);
			\node[etape] at(-1.35,1.95){$O_{N,m}$};
			\node[etape] at(-5.5,2.55){$\Big(0\Big)$};
			\node[etape] at(-0.3,4.0){$\Big(0\Big)$};
			
			\node[etape2] at(-5.5,0.25){\textbf{Target structure:}};
			\draw[fill=green!45] (-8.0,-0.5) rectangle (-5.0,-1.0);
			\draw[fill=red!45] (-4.95,-0.5) rectangle (-2.2,-1.0);
			\node[etape] at(-0.7,-0.8){$\cdots\cdots$};			
			\draw[fill=blue!45] (-0.15,-0.5) rectangle (+1.6,-1.0);
			\node[etape] at(-5.75,-0.8){$O_{1,m}$};
			\node[etape] at(-2.85,-0.8){$O_{2,m}$};
			\node[etape] at(+1.25,-0.8){$O_{N,m}$};
			\draw (-8.5,-0.25) rectangle (2,-1.25);	
		\end{tikzpicture}
	\end{center}
\end{figure}
We now present aggregated results of DP-based methods for large and very large instances.

\subsubsection{DP-based methods and convexification}\label{sectionNE_Large_DP}
We adopt the same conventions as in Section \ref{sectionNESmall}. Quality is defined as the relative gap with respect to the best known lower bound, since the optimum is unavailable. The corresponding method is indicated in the third column, and, for the TSDP method, the funnel size $\lambda P$ is reported in parentheses. Finally, the label \textit{memory} indicates that a solution method could not be executed due to memory limitations.\\
We present the results for the remaining penalty function  $x\mapsto\frac{2}{\pi}\arctan(x)$ for both CST and AVG cases.
The solution method TSDP provides the best lower bound when available. The difference with \LS\pht is very small (less than $1\%$), although the gap tends to increase with $N$.\\When TSDP is not available due to memory constraints, ILS provides the best lower bound in the CST case, slightly outperforming \LS. In the AVG case, however, \LS\pht provides better results by about 1-7\%. In both cases, the gap increases with $N$.\\As previously noted in Section \ref{sectionNESmall}, ILS results depend on the price trajectory. They are excellent in the CST case and deteriorate with higher $\sigma$, more so with high drift $\mu$. In financial terms, this heuristic is less suitable for highly volatile markets, especially when prices trend upward.
Lastly, the firesale (FS) strategy underperforms the best lower bound by about 20-30\%, and the Linear sale by 1 to 10\%, across the board. The monotonic upper bound \ub2 is  only relevant in the CST case for $N=10$, and nowhere else.
The corresponding Tables \ref{tabLarge_CST_f5} and \ref{tabLarge_AVG_f5} are provided in Appendix \ref{sectionExtraTables}. 
\textbf{CPU time:} TSDP has time complexity $O(N^{\frac{3}{2}}\cdot M)$ and converges within 1 to 15 minutes (worst case is $32\;min$ when $(a,b)=(2,7)$). Therefore, it is practically usable for large instances. However, due to memory constraints discussed in Section \ref{sectionNE_Large_Mem}, it is no longer available when $a+b\geq 10$.\\ The ILS method is relatively fast, but less accurate for large instances.\\ Trajectory analysis reveals that ILS converges faster for highly fluctuating price paths, as it quickly becomes trapped in the first local maximum it encounters. TSDP also shows a decreasing computation time as $\sigma$ increases.\\ Lastly, FS, LIS and \ub2 are computed in quasi-constant time.
We finally discuss the convexification results for large instances.\\ 
As mentioned in Section \ref{sectionNEFrame}, only \Bon\pht supports the arctangent penalty. In Section \ref{sectionNE_Small_Conv}, we saw that for $N>10^2$, neither our \BnB\pht nor the solver \Cou\pht provided meaningful upper bounds.\\
For large and very large instances, we additionally face memory issues, and only a handful of instances converge. We compare the lower bounds obtained by TSDP, ILS, and \Bon.\\\Bon\pht converges for instances ranging from $(2,6)$ to $(2,9)$, while it fails on the remaining instances. Only two instances, namely (2,6) and (2,7), are solved by both TSDP and \Bon. In both cases, TSDP yields slightly better lower bounds. For the instances $(2,8)$ and $(2,9)$, \Bon\pht provides the strongest lower bound, again with a marginal improvement over ILS. For instances ranging from $(3,7)$ through (3,9), ILS is the only method that returns a lower bound. Finally, the monotonic bound \ub2 yields an informative upper bound only when $N=10$.
Consequently, in the absence of a proof of optimality or sufficiently tight upper bounds, the optimal values for the large and very large instances cannot be estimated accurately. The corresponding numerical results are reported in Table \ref{tabLarge_Convex_f5} in Appendix \ref{sectionExtraTables}, 
where the best lower bound among TSDP, ILS, and \Bon\pht is indicated by a cross ($\times$).

%% file: numalgo2026_ccl_v3.tex
We solved the non-convex integer problem \PNS\pht, either exactly or approximately, while pursuing two directions: the first path consisted of recursively decomposing the problem into simpler subproblems with fewer decision variables. We proved that the Bellman equation applies and established an exact solution algorithm using DP, in pseudo-polynomial time.
However, it is efficient in practice only for small and medium-sized instances.\\
We then introduced the TSDP method, with a better time complexity than exact DP. 
We also proposed a local search algorithm, denoted ILS, which is very accurate in the constant price case.\\\EN\pht revealed that TSDP is strong for many tested instances when memory permits.
 \\Lastly, under additional assumptions on the penalty function, we derived an analytical upper bound for \PNS, referred to as \ub2. However, the \EN\pht showed that this bound is not tight.\vspace{0.2cm}\\
The second direction we explored was convexification. Using McCormick inequalities, we presented a tight convex relaxation of \PNS, and developed an adapted \BnB\pht to solve the original non-convex problem. In the \EN, however, only very small instances converged, both for our \BnB\pht and for most benchmark solvers, within the three-hour time limit. Our adapted \BnB\pht sometimes gives better upper bounds than benchmark solvers, but often fails to provide a good upper bound.
\\We complete this study by highlighting two promising directions for future research: First, as mentioned in Section \ref{sectionNELarge}, significant memory savings can be achieved by designing an adapted data structure for DP-based algorithms. Second, our \BnB\pht can be further improved through tighter convex relaxations and more advanced branching techniques. 

%% file: numalgo2025_appendix_bellman.tex
\begin{pf} By induction on n.\\
	Let n be fixed and $(n<m)$. For $n=1$, we have to sell the whole block at the only allowed time n. 
	\begin{equation*}
			\begin{split}
					O_{1,m}&=f(m) \\
					& = \big[p_1-c_1\,g\big(m\big)\big]\,m \\
					& = \displaystyle\max_{i \in [\![0;m]\!]}\left\{ \big[p_1-c_1\, g(m)\big]i\right\},\;since\;(p_1-c_1\,g(m))>0\\
					& = \displaystyle\max_{i \in [\![0;m]\!]}\left\{O_{0,m-i}  + \big[p_1-c_1\, g(m)\big]i\right\},\;since\;O_{0,m-i}=0\; \forall i\\
				\end{split}
		\end{equation*}
	which proves the result for n=1. Let us suppose the result be true for a given n.\\
	Let $g_i(x)\equiv\Big[p_i-c_i\,g\Big(\displaystyle\sum_{k=1}^{i} x_k\Big)\Big]$
	and let $(\widehat{x_1},\cdots,\widehat{x}_{t+1})$ be an optimal solution of $(P_{t+1,n})$.\\By definition: 
	$O_{n+1,m}=\displaystyle\sum_{i=1}^{n}g_i(\widehat{x})\,\widehat{x_i} + \big[p_{n+1}-c_{n+1}\,g^(m)\big]\,\widehat{x}_{n+1} $\\
	Since $\displaystyle\sum_{i=1}^{n}\widehat{x_i}=m -\widehat{x}_{n+1}$, the first $n$ coordinates of $\widehat{x}$ constitute an admissible solution of $(P_{n,m-\widehat{x}_{n+1}})$. By our induction assumption,  its value function is therefore bounded by $O_{n,m-\widehat{x}_{n+1}}$. It yields\\
	\begin{equation*}
			\begin{split}
					O_{n+1,m}&\leq O_{n,m-\widehat{x}_{n+1}} + \big[p_{n+1}-c_{n+1}\,g(m)\big]\,\widehat{x}_{n+1}\\
					&\leq\displaystyle\max_{i \in [\![0;m]\!]} \left\{ O_{n,m-i}  + \big[p_{n+1}-c_{n+1}\,g(m)\big]i\;\right\}
				\end{split}
		\end{equation*}
	Conversely, for all $0\leq i\leq m$, $(P_{n,m-i})$ admits a solution. Hence,\\
	\[
	\exists(x_1,\cdots,x_n)\;s.t \left\{
	\begin{array}{l}
			O_{n,m-i}=\displaystyle\sum_{j=1}^{n}g_j(x)\,x_j \\
			\displaystyle\sum_{j=1}^{n}x_j=m-i	
		\end{array}
	\right.
	\]
	Appending $x$ by $i$ $(x_1,\cdots,x_n,\mathbf{i})$ leads to an admissible for $(P_{n+1,m})$. Hence, its value function is bounded by $O_{n+1,m}:$\\
	\begin{align*}
			f(x) &\leq O_{n+1,m}\\
			\displaystyle\sum_{j=1}^{n}g_j(x)\,x_j + \big[p_{n+1}-c_{n+1}\,g(m)\big]\,i & \leq O_{n+1,m}\\
			O_{n,m-i} + \big[p_{n+1}-c_{n+1}\,g(m)\big]\,i & \leq O_{n+1,m}
		\end{align*}
	Since this inequality can be established for any $i$ in $[\![0;m]\!]$, we take the maximum on $i$ and conclude:\\
	\[
	\displaystyle\max_{i \in [\![0;m]\!]} \left\{O_{n,m-i} + \big[p_{n+1}-c_{n+1}\,g(m)\big]\,i\right\} \leq O_{n+1,m}
	\]
	which proves the results for $(P_{n+1,m})$.
\end{pf}

%% file: numalgo2026_appendix_np.tex
\noindent The complexity status of \PNS\pht remains open. We only establish that its decision version belongs to NP and provide a pseudo-polynomial dynamic programming algorithm of complexity $O(N\,M^2)$.\vspace{0.2cm}\\
\textbf{Proposition:} The decision version of \PNS\pht belongs to NP.\\
\begin{pf}
A certificate consists of vector $x=(x_1,\cdots,x_N)$ satisfying $x_i\in\NN$.\\
Given such a certificate, feasibility can be verified by checking that $\sum_{i=1}^N x_i = M$ and by computing the cumulative sums $S_i=\sum_{k=1}^i x_k$, which requires $O(N)$ arithmetic operations.The objective value 
\[
	\sum_{i=1}^N \big( p_i - c_i \, g\!\left( S_i \right) \big)x_i
\]
can then be evaluated in polynomial time, assuming that g is polynomial-time computable. Finally, one checks whether the objective value is at least a prescribed threshold K. Hence the decision version of \PNS\pht can be verified in polynomial time and therefore belongs to NP. 
\end{pf}

%% file: numalgo2025_appendix_extratables.tex
\begin{table}[width=.9\linewidth,cols=1,pos=H]
	\centering
	\caption{CPU Time -  Exact DP}
	\label{tabCpuCompLB}
	\input{recap_cpu_lb_tab_1.tex}
\end{table}
\begin{table}[width=.9\linewidth,cols=1,pos=H]
	\centering
	\caption{CPU time - TSDP}
	\label{tabSmall_TSDP_CPU}
	\input{ts1_cpu_tab2.tex}
\end{table}
\begin{table}[width=.9\linewidth,cols=1,pos=H]
	\centering
	\caption{Instance (3,5) - Quality by $p$-trajectory}
	\label{tabSmall_TSDP_traj35}	
	\input{focus_ILS_tab2.tex}
\end{table}
\begin{table}[width=.9\linewidth,cols=1,pos=H]
	\centering
	\caption{Upper Bounds (Convexification) - Very small instances- $x\mapsto\frac{x}{1+x}$}
	\label{tabVerySmall_UB}
	\input{convbnb_verysmall_ub_f2_tab_4.tex}
\end{table}
\begin{table}[width=.9\linewidth,cols=1,pos=H]
	\centering
	\caption{Quality - Large instances - CST}
	\label{tabLarge_CST_f5}
	\input{recap_cst_largeinstances_tab4.tex}
\end{table}
\vspace{-0.2cm}
\begin{table}[width=.9\linewidth,cols=1,pos=H]
	\centering
	\caption{Quality - Large instances - AVG}
	\label{tabLarge_AVG_f5}
	\input{recap_avg_largeInstances_tab4.tex}
\end{table}
\begin{table}[width=.9\linewidth,cols=1,pos=H]
	\centering
	\caption{Quality (Convex.) - Large instances}
	\label{tabLarge_Convex_f5}
	\input{recap_conv_large_f5_tab_5.tex}
\end{table}

%% file: recap_cpu_lb_tab_1.tex
\begin{tabular}{|c|c|c|c|c||c|c|c|}
	\hline
	\multicolumn{2}{|c|}{CPU}&\multicolumn{3}{c||}{CST,$\;G: x\mapsto\frac{2}{\pi}arctan(x)$}&\multicolumn{3}{c|}{AVG,$\;G: x\mapsto\frac{2}{\pi}arctan(x)$}\\
	\hline
	N&M&DP&TSDP&ILS&DP&TSDP&ILS     \\\hline
	$10^1$&$10^2$&$<\,0.01$&$<\,0.01$&$<\,0.01$&$<\,0.01$&$<\,0.01$&$<\,0.01$ 		\\\hline
	$10^1$&$10^3$&0.03&0.02&$<\,0.01$&0.02&0.02&$<\,0.01$            \\\hline
	$10^1$&$10^4$&1.78&0.12&$<\,0.01$&1.78&0.11&$<\,0.01$             \\\hline
	$10^1$&$10^5$&179&0.21&$<\,0.01$&178&0.19&$<\,0.01$              \\\hline
	$10^2$&$10^3$&0.18&0.15&0.01&0.18&0.15&$<\,0.01$			\\\hline
	$10^2$&$10^4$&17.68&2.12&0.22&17.63&1.8&0.07			\\\hline
	$10^2$&$10^5$&1786&12.90&0.73&1763&6.77&0.14              \\\hline
	$10^3$&$10^5$&17576&213.36&35.42&17856&138.24&2.92            \\\hline
	$10^1$&$10^6$&18261&2.10&$<\,0.01$&18142&2.02&$<\,0.01$ \\\hline
	$10^2$&$10^6$&$\sim 180\,000$&39.58&1.31&$\sim 180\,000$&28.1&0.21             \\\hline

\end{tabular}

%% file: ts1_cpu_tab2.tex
\begin{tabular}{|c|c|c|c|c|c|c|c|}
\hline
\multicolumn{2}{|c|}{CPU}&\multicolumn{2}{c|}{$\frac{x}{1+x}$}& \multicolumn{2}{c|}{$\frac{2}{\pi}\arctan(x)$}& \multicolumn{2}{c|}{$1-\frac{2}{1+\sqrt{1+x}}$}\\[2pt]
\hline
N&M&CST&AVG&CST&AVG&CST&AVG \\
\hline\hline
$10^1$&$10^2$&<0.01&<0.01&<0.01&<0.01&<0.01&<0.01             \\\hline
$10^1$&$10^3$&0.02&0.017&0.017&0.016&0.016&0.015        \\\hline
$10^1$&$10^4$&0.127&0.108&0.118&0.102&0.143&0.076       \\\hline
$10^1$&$10^5$&0.206&0.199&0.211&0.189&0.204&0.133       \\\hline
$10^2$&$10^3$&0.15&0.15&0.15&0.15&0.151&0.15            \\\hline
$10^2$&$10^4$&2.122&1.795&2.122&1.797&2.121&1.608       \\\hline
$10^2$&$10^5$&13.939&6.817&12.898&6.771&13.997&6.771    \\\hline
$10^3$&$10^5$&213.399&137.701&213.357&138.243&213.378&124.176         \\\hline
$10^1$&$10^6$&2.103&2.003&2.104&2.002&2.035&1.914       \\\hline
$10^2$&$10^6$&39.585&27.914&39.581&28.097&37.319&25.135 \\\hline
\end{tabular}

%% file: focus_ILS_tab2.tex
\begin{tabular}{|c|c|c|}
\hline
LB Quality&\multicolumn{2}{c|}{$\frac{2}{\pi}arctan(x)$}	\\\hline
\multicolumn{3}{|c|}{N=$10^3$, M=$10^5$}	\\\hline
$\mu$&$\sigma$&ILS           \\\hline
-0.05  &0.10&2.13         \\\hline
-0.05  &0.25&3.46         \\\hline
-0.05  &0.70&8.76         \\\hline
0.00  &0.10&2.06        \\\hline
0.00  &0.25&5.02         \\\hline
0.00  &0.70&13.67         \\\hline
+0.05  &0.10&3.66         \\\hline
+0.05  &0.25&5.33         \\\hline
+0.05  &0.70&22.42         \\\hline
\end{tabular}

%% file: convbnb_verysmall_ub_f2_tab_4.tex
\begin{tabular}{|c|c|c|c|c|c|}
	\hline
	\multicolumn{4}{|c|}{UB Quality}&\multicolumn{2}{c|}{CST, $\;\;G:x\mapsto\frac{x}{1+x}$}\\
	\hline
	N&M&B\&B&\Bar&\Cou&\Sci     \\\hline
	3&$10^2$&<0.01&<0.01&<0.01&<0.01    \\\hline
	4&$10^2$&<0.01&<0.01&<0.01&<0.01    \\\hline
	5&$10^2$&<0.01&<0.01&<0.01&<0.01    \\\hline
	6&$10^2$&<0.01&39.04&0.65&132.57        \\\hline
	7&$10^2$&<0.01&353.33&40.01&187.67      \\\hline
	8&$10^2$&<0.01&468.85&82.32&370.19      \\\hline
	9&$10^2$&0.19&409.88&134.94&423.27   \\\hline
	10&$10^2$&1.23&499.63&168.65&468.99  \\\hline
	\hline
	3&$10^3$&<0.01&<0.01&<0.01&<0.01    \\\hline
	4&$10^3$&<0.01&5.26&<0.01&<0.01           \\\hline
	5&$10^3$&<0.01&182.74&2.27&157.66       \\\hline
	6&$10^3$&0.02&511.09&48.9&283.96      \\\hline
	7&$10^3$&0.3&627.4&93.5&486.21        \\\hline
	8&$10^3$&1.01&638.12&170.31&545.56   \\\hline
	9&$10^3$&2.69&636.84&166.73&626.17   \\\hline
	10&$10^3$&10.55&642.51&207.56&629.58 \\\hline
	\hline
	3&$10^4$&<0.01&<0.01&<0.01&<0.01    \\\hline
	4&$10^4$&<0.01&6.69&<0.01&1.92    \\\hline
	5&$10^4$&<0.01&568.61&8.11&302.81              \\\hline
	6&$10^4$&0.09&665.92&68.73&607.2      \\\hline
	7&$10^4$&0.44&663.29&106.38&631.84     \\\hline
	8&$10^4$&1.28&655.42&169.17&655.31   \\\hline
	9&$10^4$&3.4&652.46&167.95&653.12    \\\hline
	10&$10^4$&13.58&648.3&211.86&644.87  \\\hline
\end{tabular}

%% file: recap_cst_largeinstances_tab4.tex
\begin{tabular}{|c|c|c|c|c|c|c|c||c|}
\hline
\multicolumn{3}{|c}{LB Quality}&\multicolumn{5}{c||}{CST, $G: x\mapsto\frac{2}{\pi}arctan(x)$}&{UB Quality}\\
\hline
N&M&BEST LB&FS&LIS&ILS&LS&TSDP&\ub2       \\\hline
$10^1$&$10^7$&TSDP (5000)&20.52&7.92&<0.01&<0.01&0&38.02     \\\hline
$10^1$&$10^8$&TSDP (5000)&20.52&7.92&<0.01&<0.01&0&38.02      \\\hline
$10^1$&$10^9$&ILS&20.52&7.92&0&<0.01&Memory&38.02           \\\hline
$10^2$&$10^6$&TSDP (5000)&25.01&2.14&<0.01&0.01&0&364.56 	 \\\hline
$10^2$&$10^7$&TSDP (5000)&25.01&2.14&<0.01&0.01&0&364.56     \\\hline
$10^2$&$10^8$&ILS&25.01&2.14&0&<0.01&Memory&364.56             \\\hline
$10^2$&$10^9$&ILS&25.01&2.14&0&<0.01&Memory&364.56 \\\hline
$10^3$&$10^6$&TSDP (500)&25.49&0.23&0.01&0.07&0&558.72   \\\hline
$10^3$&$10^7$&ILS&25.49&0.23&0&0.07&Memory&558.72             \\\hline
$10^3$&$10^8$&ILS&25.49&0.23&0&0.07&Memory&558.72            \\\hline
$10^3$&$10^9$&ILS&25.49&0.23&0&0.07&Memory&558.72            \\\hline
\end{tabular}

%% file: recap_avg_largeInstances_tab4.tex
\begin{tabular}{|c|c|c|c|c|c|c|c||c|}
\hline
\multicolumn{3}{|c}{LB Quality vs. best LB}&\multicolumn{5}{c||}{AVG, $G: x\mapsto\frac{2}{\pi}arctan(x)$}&{UB Quality}\\
\hline
N&M&BEST LB&FS&LIS&ILS&LS&TSDP&\ub2       \\\hline
$10^1$&$10^7$&TSDP (5000)&21.77&10.59&0.71&<0.01&0&139.07          \\\hline
$10^1$&$10^8$&TSDP (5000)&21.77&10.59&0.71&<0.01&0&139.07          \\\hline
$10^1$&$10^9$&LS&21.77&10.59&0.71&0&Memory&139.07 \\\hline
$10^2$&$10^6$&TSDP (500)&28.49&7.8&3.97&<0.01&0&419.13    \\\hline
$10^2$&$10^7$&TSDP (5000)&28.49&7.8&3.97&0.01&0&419.13     \\\hline
$10^2$&$10^8$&LS&28.49&7.79&3.97&0&Memory&419.16       \\\hline
$10^2$&$10^9$&LS&28.43&7.72&4.33&0&Memory&443.43       \\\hline
$10^3$&$10^6$&TSDP (500)&30.35&7.87&7.19&0.29&0&576.06     \\\hline
$10^3$&$10^7$&LS&30.31&7.82&7.11&0&Memory&576.49       \\\hline
$10^3$&$10^8$&LS&30.29&7.79&7.08&0&Memory&576.71       \\\hline
$10^3$&$10^9$&LS&30.31&7.81&7.11&0&Memory&576.52       \\\hline
\end{tabular}

%% file: recap_conv_large_f5_tab_5.tex
\begin{tabular}{|c|c|c|c|c||c|}
\hline
\multicolumn{6}{|c|}{CST,$\;G: x\mapsto\frac{2}{\pi}arctan(x)$}\\[2pt] \hline
\multicolumn{5}{|c||}{LB Quality vs. best LB}&\multicolumn{1}{c|}{UB Quality}\\[1pt]
\hline
N&M&Method&Best LB&\Bon&\ub2           					\\\hline
$10^1$&$10^7$&TSDP(5000)&$\times$&>3h&38.02			        \\\hline
$10^1$&$10^8$&TSDP(5000)&$\times$&>3h&38.02					\\\hline
$10^1$&$10^9$&ILS&$\times$&>3h&38.02						\\\hline
$10^2$&$10^6$&TSDP(5000)&$\times$&<0.01&364.56			\\\hline
$10^2$&$10^7$&TSDP(5000)&$\times$&<0.01&364.56			\\\hline
$10^2$&$10^8$&ILS&<0.01&$\times$&364.56		      	\\\hline
$10^2$&$10^9$&ILS&<0.01&$\times$&364.56				\\\hline
$10^3$&$10^6$&TSDP(500)&$\times$&>3h&558.72			       	\\\hline
$10^3$&$10^7$&ILS&$\times$&>3h&558.72     					\\\hline
$10^3$&$10^8$&ILS&$\times$&>3h&558.72      					\\\hline
$10^3$&$10^9$&ILS&$\times$&>3h&558.72						\\\hline
\end{tabular}

%% file: numalgo2026_appendix_calibration.tex
Given a prototype function $G$, the corresponding penalty function $g$ is defined by $g:x\mapsto G(\eta\, x)$, where $\eta$ is a constant which only depends on $G$.  
Let $L$ be a given level. We define a threshold $H$ such that $g(L)=H$. Hence $\eta$ is a scaling factor used to project the segment $[0;L]$ (which bounds decision variables) on the segment $[0;H]$ (which bounds the value of the penalty on the price). Prototype functions $G$ are continuous and injective on $\mathbb{R}^+$, hence invertible and we get $\eta=\dfrac{G^{-1}(H)}{L}$.\\
We discuss the selection of parameters $L$ and $H$, which fully determine $\eta$.\\ Partial sums $S_i$ go to $M$ as $i$ goes to $N$. 
Since $\lim_{+\infty} g=1$, it is natural for the function $g$ to get closer to 1 as $S_i$ goes to $M$. We thus set $L=M$.\\
We tested different values for $H \in ]0;1[$. An $H$ close to 1 leads to a corresponding penalty function $g$ with more separating power.
By separating power, we mean the distance between a naive heuristic, such as \textbf{FS} or \textbf{LIS}, and the optimum. 
A high separating power puts a stronger emphasis on the quality of the heuristic methods we propose. A very low one leads to a flat landscape, for which it is numerically more difficult to select the best solutions. 
Vector $p$ is assumed constant during the calibration. Although we tested different values for $H$, we present tables for $H=0.75$, $H=0.99$, which are the most relevant. Lastly, we also present the results for $\eta=1$, which corresponds to the absence of calibration  $(g\equiv G)$, to give another perspective.\\\\
In the following calibration tables, \textbf{FS} is the firesale and \textbf{LIS} the linear sale. The chosen prototype functions $G$ are listed on the top row. Results show the relative difference to the optimum, obtained by exact DP, and expressed as percentages:\\
\begin{table}[width=.9\linewidth,cols=1,pos=H]
	\centering
		\input{CalibTable3.tex}
		\vspace{0.3cm}\\
		\input{CalibTable5.tex}
		\vspace{0.3cm}\\
		\input{CalibTable0.tex}
\end{table}
\textbf{Conclusions:} 
\begin{itemize}
	\item  For the function $x \mapsto \frac{x}{1+x}$ and $x\mapsto \frac{2}{\pi}\arctan(x)$, the calibration $\eta_{0.99}$ has the best separating power for \textbf{LIS}.
	\item For the function $x\mapsto 1-\frac{2}{1+\sqrt{1+x}}$, calibration $\eta_{0.75}$ is slightly better for small instances for \textbf{LIS}. This advantage seems to vanish when $N\geq 2$. 
	\item For $\eta_{0.99}$, separating power remains stable for \textbf{FS} as $N$ and $M$ increase. Regarding \textbf{LIS}, it is stable in $M$ but decreases, while more slowly than for other scale factors, in $N$.
	\item Lastly, we notice that $x \mapsto \frac{x}{1+x}$ and $x\mapsto \frac{2}{\pi}\arctan(x)$ have approximately the same profile (which is linked to the similarity of their first-order derivatives), whereas $x\mapsto 1-\frac{2}{1+\sqrt{1+x}}$ returns weaker results for \textbf{LIS}, with a separating power of about $1\%$ or below, for all instances.
\end{itemize}
\vspace{0.5cm}
In \EN, we selected $L=M$ and $H=0.99$.

%% file: CalibTable3.tex
\begin{tabular}{|c|c|c|c|c|c|c|c|}
\hline
\multicolumn{2}{|c|}{\boldmath{$\eta_{0.75}$}}&\multicolumn{2}{|c|}{$\frac{x}{1+x}$}& \multicolumn{2}{|c|}{$\frac{2}{\pi}\arctan(x)$}& \multicolumn{2}{|c|}{$1-\frac{2}{1+\sqrt{1+x}}$}
\\
\hline
N&M&FS&LIS&FS&LIS&FS&LIS
\\
\hline\hline
$10^1$&$10^2$&33.44&0.84&37.85&0.52&23.58&1.43
\\
\hline
$10^1$&$10^3$&33.45&0.84&37.86&0.53&23.58&1.43
\\
\hline
$10^1$&$10^4$&33.45&0.84&37.86&0.53&23.58&1.43
\\
\hline
$10^1$&$10^5$&33.45&0.84&37.86&0.53&23.58&1.43
\\
\hline
$10^1$&$10^6$&33.45&0.84&37.86&0.53&23.58&1.43
\\
\hline
$10^2$&$10^3$&36.65&0.09&40.9&0.05&26.77&0.24
\\
\hline
$10^2$&$10^4$&36.65&0.09&40.9&0.05&26.77&0.24
\\
\hline
$10^2$&$10^5$&36.65&0.09&40.9&0.05&26.77&0.24
\\
\hline
$10^3$&$10^5$&36.97&0.01&41.19&0.01&27.1&0.03
\\
\hline
\end{tabular}

%% file: CalibTable5.tex
\begin{tabular}{|c|c|c|c|c|c|c|c|}
\hline
\multicolumn{2}{|c|}{\boldmath{$\eta_{0.99}$}}&\multicolumn{2}{|c|}{$\frac{x}{1+x}$}& \multicolumn{2}{|c|}{$\frac{2}{\pi}\arctan(x)$}& \multicolumn{2}{|c|}{$1-\frac{2}{1+\sqrt{1+x}}$}
\\
\hline
N&M&FS&LIS&FS&LIS&FS&LIS
\\
\hline\hline
$10^1$&$10^2$&18.27&6.04&20.42&7.81&5.48&0.94
\\
\hline
$10^1$&$10^3$&18.38&6.17&20.52&7.92&5.51&0.98
\\
\hline
$10^1$&$10^4$&18.38&6.17&20.52&7.92&5.51&0.98
\\
\hline
$10^1$&$10^5$&18.38&6.17&20.52&7.92&5.51&0.98
\\
\hline
$10^1$&$10^6$&18.38&6.17&20.52&7.92&5.51&0.98
\\
\hline
$10^2$&$10^3$&22.63&1.97&25.00&2.12&7.00&0.53
\\
\hline
$10^2$&$10^4$&22.65&2.00&25.00&2.14&7.08&0.62
\\
\hline
$10^2$&$10^5$&22.65&2.00&25.01&2.14&7.08&0.62
\\
\hline
$10^3$&$10^5$&23.11&0.24&25.49&0.23&7.28&0.18
\\
\hline
\end{tabular}

%% file: CalibTable0.tex
\begin{tabular}{|c|c|c|c|c|c|c|c|}
\hline
\multicolumn{2}{|c|}{\boldmath{$\eta =1$}}&\multicolumn{2}{|c|}{$\frac{x}{1+x}$}& \multicolumn{2}{|c|}{$\frac{2}{\pi}\arctan(x)$}& \multicolumn{2}{|c|}{$1-\frac{2}{1+\sqrt{1+x}}$}
\\
\hline
N&M&FS&LIS&FS&LIS&FS&LIS
\\
\hline\hline
$10^1$&$10^2$&18.16&6.03&15.25&6.41&23.91&1.98
\\
\hline
$10^1$&$10^3$&3.44&1.79&2.53&1.46&18.43&2.59
\\
\hline
$10^1$&$10^4$&0.45&0.28&0.32&0.21&9.47&1.6
\\
\hline
$10^1$&$10^5$&0.05&0.04&0.04&0.02&3.68&0.66
\\
\hline
$10^1$&$10^6$&0.01&$<\epsilon$&$<\epsilon$&$<\epsilon$&1.25&0.23
\\
\hline
$10^2$&$10^3$&4.61&1.18&3.38&1.08&22.06&0.96
\\
\hline
$10^2$&$10^4$&0.68&0.3&0.47&0.23&11.9&0.89
\\
\hline
$10^2$&$10^5$&0.09&0.05&0.06&0.04&4.79&0.45
\\
\hline
$10^3$&$10^5$&0.09&0.03&0.06&0.02&4.92&0.14
\\
\hline
\end{tabular}

%% file: numalgo2025_appendix_notations.tex
\begin{itemize}
	\item\textbf{Coverage ratio}: number of instances solved to optimality over the total number of instances.
	\item\textbf{DP:} dynamic programming.
	\item\textbf{FS:} \textit{Firesale}.The whole block is liquidated in one shot at the first time step, $x=\left(M,0,\cdots,0\right)$
	\item\textbf{ILS:} Iterative Local Search.
	\item\textbf{LIS:} \textit{Linear sale}. All the decision variables take the same value $x=\left(M/N,\cdots,M/N\right)$. Hence, the quantity to liquidate decrease linearly with time.
	\item\textbf{LS:} LocalSolver
	\item\textbf{TSDP:} two-step dynamic programming.
	\item\textbf{CST:} instance with a constant price vector $p$ and coefficients $c$. 
	\item\textbf{AVG:} average over instances with varying price vectors $p$ and coefficients $c$ (cf. Section \ref{sectionNEFrame}). 
\end{itemize}